\documentclass{amsart}
\usepackage{amsthm}
\usepackage{amsaddr}

\usepackage[english]{babel}

\usepackage{color, soul}

\usepackage[retainorgcmds]{IEEEtrantools}

\usepackage[backend=bibtex8, isbn=false, url = false, doi = false, eprint = true, style=numeric, firstinits=true]{biblatex}

\DefineBibliographyStrings{english}{pages={}}


\renewbibmacro{in:}{%
  \ifentrytype{article}{}{%
  \printtext{\bibstring{in}\intitlepunct}}}

\renewcommand*{\intitlepunct}{}


\addbibresource{bibrefs.bib}

\sethlcolor{yellow}

\renewcommand{\hl}{}

\newcommand{\Rb}{\mathbb R}

\newcommand{\Zb}{\mathbb Z}

\newcommand{\eps}{\varepsilon}

\newcommand{\lb}{\left(}
\newcommand{\rb}{\right)}
\newcommand{\Ac}{\mathcal{A}}

\newcommand{\sumd}[2]{\sum_{u = #1}^{#2}}

\newcommand{\enbrace}[1]{\lb #1 \rb}
\newcommand{\inv}[1]{{#1}^{-1}}
\newcommand{\setdef}[2]{\left\{ #1\ \left|\ #2 \right.\right\}}
\newcommand{\seqz}[2]{\left\{ {#1}_{#2} \right\}_{#2\in\Zb}}
\newcommand{\seq}[1]{ \{ #1 \} }

\newcommand{\half}[1]{\frac{#1}{2}}

\newcommand{\vmod}[1]{\left| #1 \right|}

\newcommand{\Zplus}{\Zb^+}
\newcommand{\Zminus}{\Zb^-}

\newcommand{\Nc}{\mathcal{N}}

\newcommand{\normop}[1]{\left\| #1 \right\|}
\newcommand{\normban}[1]{\left\| #1 \right\|}

\newcommand{\sfm}[2]{X_{#1} P X_{-#2} }
\newcommand{\ufm}[2]{X_{#1} \enbrace{I-P} X_{-#2} }

\newcommand{\ek}{\exp_{p_k}}
\newcommand{\eki}{\exp_{p_k}^{-1}}
\newcommand{\emk}{\exp^{-1}_{p_{k+1}}}

\newcommand{\Nint}{[-N,N-1]} 
\newcommand{\Nints}{[-N,N]} 

\newcommand{\rayp}{{\Zplus}}
\newcommand{\rayn}{{\Zminus}}

\numberwithin{equation}{section}

\theoremstyle{plain} \newtheorem{theorem}{Theorem}
\theoremstyle{plain} \newtheorem*{theoremnon}{Theorem}
\theoremstyle{plain} \newtheorem{statement}{Statement}[section]
\theoremstyle{plain} \newtheorem{lemmaa}[statement]{Lemma}
\theoremstyle{plain} 
\theoremstyle{definition} \newtheorem{deff}{Definition}
\theoremstyle{definition} 
\theoremstyle{remark} \newtheorem{rem}[statement]{Remark}

\DeclareMathOperator{\dist}{dist}

\begin{document}

\title[Generalizations of theorems of Maizel and Pliss]{Generalizations of analogs of theorems of Maizel and Pliss and their application in Shadowing Theory}
\author{Dmitry Todorov}
\address{Chebyshev laboratory, Saint Petersburg State University \\ 14th line of Vasiljevsky Island, 29B \\ 199178, Saint-Petersburg, Russia}
\thanks{Research supported by RFBR (project 12-01-00275) and the Chebyshev laboratory (grant of the Russian government N 11.G34.31.0026).}


\keywords{admissibility, difference equations, hyperbolicity, limit shadowing, structural stability}

\subjclass[2000]{Primary 34D09, 34K12; Secondary 37C50, 34D30}




\begin{abstract}
We generalize two classical results of Maizel and Pliss that describe relations between hyperbolicity properties of linear system of difference equations and its ability to have a bounded solution for every bounded inhomogeneity. We also apply one of this generalizations in shadowing theory of diffeomorphisms to prove that some sort of limit shadowing is equivalent to structural stability.
\end{abstract}


\maketitle

\section{Introduction}

In \cite{PERRON} Perron defined property (B) for systems of differential equations. The property is that an inhomogeneous system of differential equations has a bounded solution for every bounded inhomogeneity. ``Bounded'' here means that the standard $\sup$ norm on the space of continuous functions is bounded. In \cite{MAIZEL} A. Maizel proved a theorem that links property (B) on the half-line with the hyperbolicity property. In \cite{PLISSBSILSDE} Pliss characterized an analog of the property (B) on the full line in terms of hyperbolicity on two half-lines. The proof of the Pliss' theorem is based on the Maizel's theorem.

There are lots of papers devoted to the study of connections between Perron property (which is often called admissibility) and hyperbolicity (which has a form of exponential dichotomy in this sort of papers). For references, see \cite{BASK__SPEC_ANAL_DEFF_OPER_UNBOUND_COEF_09, LAT__FRED_PROP_EVOL_SEMIGROUPS_04, LAT__DICH_FRED_PROPS_EVOL_EQ_07, SASU__TRANS_INV_SPACES_ASYMPT_PROPS_VAR_EQ_11}.

The papers mentioned generalize statements, similar to Maizel and Pliss theorems, in different ways. There are two types of such results -- ones that widen the class of spaces to which theorem can be applied and ones that replaces the Perron property by some similar notion. We are interested in the first ones. All the recent results deal with infinite-dimensional case but we will only use their finite-dimensional versions.

Consider inhomogeneous system of linear nonautonomous equations
\begin{IEEEeqnarray}{rCll}
\dot{x} & = & A(t)x + f(t),\quad & t\in I. \label{eqs:inhomcont}
\end{IEEEeqnarray}
%
Here $I$ is either a half-line $[0,\infty)$ or a full line $\Rb$ and $A(t)$ is
\hl{a linear}
operator mapping $\Rb^d$ to itself for each $t.$ We assume that operators $A(t)$ have uniformly bounded norms $\normop{A(t)}.$
We say that a linear space $B,$ containing functions that map $I$ to $\Rb^d,$ is admissible for system \eqref{eqs:inhomcont} if for each inhomogeneity $f$ from $B$ there exists a solution $x$ that belongs to $B.$ Maizel in his original paper \cite{MAIZEL} considered the case when $I = [0,+\infty)$ and $B$ is the space of bounded continuous functions. He proved that the space of bounded continuous functions is admissible for system \eqref{eqs:inhomcont} if and only if this system posses exponential dichotomy.  Summarizing the results of \cite{LAT__FRED_PROP_EVOL_SEMIGROUPS_04, SASU__DISCR_ADMISS_EXP_DICH_EVOL_FAM_03, SASU__EXP_DICH_LPLQ_ADM_HALF_06, SASU__UNIFORM_DICH_EXP_DICH_HALFLINE_06, BASK__SPEC_ANAL_DEFF_OPER_UNBOUND_COEF_09}, Maizel theorem also holds when $B$ is one of the following spaces:
\begin{itemize}
\item space $L_p,\ p\in [1,\infty]$;
\item space $C_0$ of continuous functions, tending to zero at infinity;
\item space $C_{b,0}$ of bounded continuous functions, tending to zero at infinity;
\item homogeneous spaces that generalize \hl{the preceding three cases} (see definition in \cite{BASK__SPEC_ANAL_DEFF_OPER_UNBOUND_COEF_09} ).
\end{itemize}

Pliss considered the case when $I=\Zb$ and he proved that the space of bounded continuous functions is admissible for system \eqref{eqs:inhomcont} if and only if this system posses exponential dichotomy on both $[0,+\infty)$ and $(-\infty,0],$ and stable subspaces at zero are transverse for these two dichotomies.  As far as we discovered, there
\hl{does not}
exist any direct generalization of Pliss theorem but there exist some characterizations of invertibility and Fredholm properties of the operator $(L x)(t) = \dot{x}(t) - A(t)x(t),$ associated with system \eqref{eqs:inhomcont}, in terms of dichotomy of system \eqref{eqs:inhomcont} on two half-lines and certain properties of dichotomy projections at zero. See \cite{BASK__INV_FREDHOLM_PROPS_DIF_OPER_00, LAT__DICH_FRED_PROPS_EVOL_EQ_07} for exact statements. Authors in these papers deal with spaces $C_0$ and $L_p,\ p\in [1,\infty).$

One can see that all the spaces mentioned above have certain homogeneity properties. We prove generalizations of theorems of Maizel and Pliss for difference equations for the case of spaces of sequences with prescribed decay rate.
 Such spaces, in contrast, don't have homogeneity properties.


The discrete analog of the Pliss' theorem is widely used in shadowing theory (see \cite{PILTIKLSISS, TIKHHOL, PILMELISP}).
The theory of shadowing of approximate trajectories (pseudotrajectories)
of dynamical systems is now a well developed part of the global theory of
dynamical systems (see, for example, monographs \cite{PILSDS, PALSDS}). In particular the connections between shadowing and structural stability are of big interest.
There exist many types of shadowing for diffeomorphisms of closed manifolds. The simplest one is the standard shadowing, that is also often called a pseudo-orbit tracing property (POTP). Let $M$ be a closed Riemannian manifold.
Diffeomorphism $f:M\to M$ is said to have POTP if for a given accuracy any pseudotrajectory with errors small enough  can be approximated (shadowed) by
\hl{an}
exact trajectory.
It is shown in \cite{SAKAI__POTP_STC_DIFF_CLOSED_MAN} that an interior in $C^1$-topology of a set of diffeomorphisms of a manifold $M$ having standard shadowing property coincides with the set of structurally stable diffeomorphisms of $M.$

Standard shadowing doesn't provide a good control on the accuracy of shadowing in terms of a size of pseudotrajectory errors. Lipschitz shadowing property provides a linear control.
It is well known that uniformly hyperbolic systems satisfy not just POTP, but also Lipschitz shadowing property. The same was proved for structurally stable systems (see \cite{PILSDS}). Recently it was shown that Lipschitz shadowing is equivalent to structural stability (see \cite{PILTIKLSISS}).
It was also shown that structural stability is equivalent to H\"{o}lder shadowing property under certain additional assumptions (see \cite{TIKHHOL}).

One can also move in different direction and restrict a set of pseudotrajectories. In particular, one may ask pseudotrajectory $\seqz{x}{k}$ to have errors $\dist(f(x_k),x_{k+1})$ that tend to zero with $\vmod{k}\to\infty$ and the distance $\dist(y_k,x_k)$ between point of a shadowing trajectory $\seqz{y}{k}$ and point of the pseudotrajectory to tend to zero when $\vmod{k}\to\infty.$ This is called two-sided limit shadowing property.
It is known that structurally stable system has two-sided limit shadowing property but even the $C^1$-interior of the set of diffeomorphisms having two-sided shadowing property
\hl{does not coincide}
with the set of structurally stable diffeomorphisms (see \cite{PILLMSP}).

To make two-sided limit shadowing closer to structural stability it is enough to ask for linear control, summability of errors and summability of distances between points of the pseudotrajectory and points of shadowing trajectory. This is called Lipschitz $L_p$ shadowing property. The fact that Lipschitz $L_p$ shadowing follows from a uniform hyperbolicity of a system is well known (see \cite{PILSDS}). Recently, in \cite{FAK__DIFF_WITH_LP_SHAD_PROP} it is shown that the Lipschitz $L_p$ shadowing also follows form structural stability and that $C^1$ interior of a set of diffeomorphisms having Lipschitz $L_p$ shadowing property coincides with the set of structurally stable diffeomorphisms of $M.$

We introduce a property that is in some sense similar to Lipschitz $L_p$ shadowing, but instead of summability of errors and distances we require a polynomial decay rate. We show that this property is equivalent to structural stability. We can't use techniques, developed in \cite{PILTIKLSISS} directly, because they use discrete version of the original Pliss theorem as an essential part. Existing versions of this theorem are not applicable to the case of decaying sequences. To overcome this difficulty, we provide a generalization of this theorem, that suits our case.




\section{Definitions} \label{sec:defs}

Let $I$ be either $\Zplus = \setdef{k\in\Zb}{k\geq 0}$ or $\Zminus = \setdef{k\in\Zb}{k\leq 0}$ or $\Zb.$ Let $\Ac=\{A_k\}_{k\in I}$ be a sequence of linear isomorphisms $\Rb^d\to\Rb^d$ indexed by integers from $I.$ Consider homogeneous and inhomogeneous equations associated with this sequence.

\begin{IEEEeqnarray}{rCll}
x_{k+1}&=&A_k x_k,\qquad &k\in I; \label{eq:homogen} \\
x_{k+1}&=&A_k x_k + f_{k+1},\qquad &k\in I. \label{eq:nonhomogen}
\end{IEEEeqnarray}

\begin{rem}
For $I = \rayp$ we take $f_k$ to be defined for $k\geq 0$ and $f_0=0.$
\end{rem}

\noindent Define an analog of 
\hl{Cauchy matrix}
for equations \eqref{eq:homogen}:
$$
	\Phi_{m,l} = \begin{cases}
		A_{m-1}\circ\ldots\circ A_{l}, & m > l,\\
		Id, & m=l,\\
		A_{m}^{-1}\circ\ldots\circ A_{l-1}^{-1}, & m < l.
	\end{cases}
$$

Fix $\omega \geq 0.$ We use linear subspaces of the space of sequences of vectors from $\Rb^d,$ indexed by integers from $I.$ Denote the Banach space of sequences with bounded norm $\normban{x}_\omega=\sup\limits_{k\in I} \vmod{x_k} (|k|+1)^\omega$ by $\Nc_{\omega}(I).$ Such spaces have been already studied in a similar context (see \cite{BICHEG_COND_INVERT_DIF_OPER}). It is important to note that these spaces are neither homogeneous in the sense of Baskakov (see \cite{BASK__SPEC_ANAL_DEFF_OPER_UNBOUND_COEF_09}) nor
translation-invariant in the sense of Sasu (see \cite{SASU__TRANS_INV_SPACES_ASYMPT_PROPS_VAR_EQ_11}).

\begin{deff}
We say that a sequence $\Ac$ has Perron property $B_{\omega}(I)$ if for any sequence $f\in \Nc_\omega(I)$ there exists a solution of the inhomogeneous system of difference equations with inhomogeneity $f$ that belongs to $\Nc_\omega(I).$
\end{deff}

We use the following definition from \cite{PILGNH}:

\begin{deff} We say that a sequence $\mathcal{A}$ is hyperbolic on $I$ if there exist constants $K > 0,$ $\lambda\in (0,1)$ and projections $P_k, Q_k,\ k\in I$ such that if $S_k=P_k\Rb^d$ and $U_k=Q_k\Rb^d$ then the following holds:
\begin{IEEEeqnarray}{c}
\Rb^d=S_k \oplus  U_k; \\
A_kS_k=S_{k+1},\ A_kU_k=U_{k+1};\\
|\Phi_{k,l}v|\leq K\lambda^{k-l}|v|,\ v\in S_l,\ k\geq l; \label{hypdef:stable1}\\
|\Phi_{k,l}v|\leq K\lambda^{l-k}|v|,\ v\in U_l,\ k\leq l; \label{hypdef:unstable1}\\
\normop{P_k},\normop{Q_k} \leq K. \label{hypdef:projbound}
\end{IEEEeqnarray}
Everywhere here we mean that all indices are from $I.$
\end{deff}

\begin{rem}
We call the spaces $S_k$ and $U_k$ stable and unstable spaces of the sequence $\Ac.$
\end{rem}

\begin{rem} \label{rem:projbound}
If norms of all $\normop{A_k}$ and $\normop{A_k}^{-1}$ are bounded then conditions \eqref{hypdef:stable1} and \eqref{hypdef:unstable1} imply condition \eqref{hypdef:projbound} (with different constant $K$ in general), which is equivalent to the boundedness from zero of the angle between the spaces $S_k$ and $U_k$ (see \cite{DALKREINSTAB} p. 224, 234, 237 for example).
\end{rem}

\begin{rem} \label{rem:hyperbound_omega}
Let $I=\Zplus.$ Then conditions \eqref{hypdef:stable1} and \eqref{hypdef:unstable1} for some $\lambda\in (0,1)$ and $K>0$ follow from the existence of $\lambda_1 \in (0,1)$ and $K_1 > 0$ such that the following estimates hold
\begin{IEEEeqnarray}{c}
|\Phi_{k,l}v|\leq K_1\lambda_1^{k-l}(k+1)^{-\omega} (l+1)^{\omega}|v|,\ v\in S_l,\ k\geq l; \label{hypdef:stable2} \\
|\Phi_{k,l}v|\leq K_1\lambda_1^{l-k}(k+1)^{-\omega} (l+1)^{\omega}|v|,\ v\in U_l,\ k\leq l. \label{hypdef:unstable2}
\end{IEEEeqnarray}
Also everywhere here we mean that all indices are from $I.$
\end{rem}

\begin{proof}
Let conditions \eqref{hypdef:stable2} and \eqref{hypdef:unstable2} be satisfied. We show that conditions \eqref{hypdef:stable1} and \eqref{hypdef:unstable1} are also satisfied:
$$ K_1 \lambda_1^{l-k} (k+1)^{-\omega} (l+1)^{\omega} \leq K_1 \lambda_1^{\frac{l}{2}} (l+1)^{\omega} \leq $$
$$ \leq \enbrace{\max_{l\in\Zplus} \enbrace{K_1 \lambda_1^{\frac{l}{2}} (l+1)^{\omega}} } \enbrace{\lambda_1^{ \frac{1}{2} } }^{l} = K_2\lambda_2^l \leq K_2 \lambda_2^{l-k},\ 2k\leq l, $$
where $\lambda_2 = \sqrt{\lambda_1}$ and $K_2 = \enbrace{K_1 \lambda_1^{\frac{l}{2}} (l+1)^{\omega}};$ and
$$ K_1 \lambda_1^{l-k} (k+1)^{-\omega} (l+1)^{\omega} \leq K_1 \lambda_1^{l-k} (k+1)^{-\omega} (2k+1)^{\omega} = $$
$$ = 2^{\omega} K_1 \lambda_1^{l-k} \enbrace{\frac{k+\frac{1}{2}}{k+1}}^{\omega} \leq K_3 \lambda_1^{l-k},\ 2k \geq l \geq k,$$
where $K_3 = 2^{\omega} K_1 \enbrace{\frac{k+\frac{1}{2}}{k+1}}^{\omega}.$
This proves inequality \eqref{hypdef:unstable1} for $\lambda = \max(\lambda_1,\lambda_2)$ and $K=\max(K_1,K_2,K_3).$ Inequality \eqref{hypdef:stable1} is obvious since
$$(k+1)^{-\omega} (l+1)^{\omega} \leq 1,\ k\geq l.$$

\end{proof}

\section{Main Results}

We prove the following theorem in Section \ref{sec:maizelthm}:

\begin{theorem} [a generalization of the discrete analog of Maizel Theorem] \label{thm:Maizelcor}
Let $I=\Zplus$ and the norms of all matrices $A_k$ and $\inv{A_k}$ be bounded by $M>0.$ A sequence $\Ac$ has property $B_\omega(I)$ iff it is hyperbolic on  $\Zplus.$
\end{theorem}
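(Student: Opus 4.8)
The plan is to strip the weight off the space by a non-autonomous rescaling that transfers it onto the coefficient matrices, thereby reducing property $B_\omega(\Zplus)$ to the classical (unweighted) discrete analog of Maizel's theorem. Given a sequence $x=\{x_k\}$, I would set $y_k=(k+1)^\omega x_k$, so that $\normB{y}=\normban{x}_\omega$ and $x\in\Nc_\omega(\Zplus)$ if and only if $y$ is bounded. Substituting $x_k=(k+1)^{-\omega}y_k$ into \eqref{eq:nonhomogen} and multiplying by $(k+2)^\omega$ turns the equation into $y_{k+1}=\tilde{A}_k y_k+g_{k+1}$, where $\tilde{A}_k=c_k A_k$ with $c_k=\enbrace{\frac{k+2}{k+1}}^\omega$ and $g_{k+1}=(k+2)^\omega f_{k+1}$. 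Since $g_j=(j+1)^\omega f_j$ for every $j$ (using $f_0=0$), we get $\normB{g}=\normban{f}_\omega$, so $f\in\Nc_\omega(\Zplus)$ exactly when $g$ is bounded. Hence $\Ac$ has property $B_\omega(\Zplus)$ if and only if the rescaled sequence $\tilde{\Ac}=\{\tilde{A}_k\}$ has the unweighted property $B_0(\Zplus)$.

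Next I would check that $\tilde{\Ac}$ satisfies the hypotheses of the classical theorem: the factor $c_k$ lies in $[1,2^\omega]$, hence is bounded and bounded away from zero, so $\normop{\tilde{A}_k}$ and $\normop{\tilde{A}_k^{-1}}$ remain uniformly bounded. The classical discrete analog of Maizel's theorem (the case $\omega=0$) then gives that $B_0(\Zplus)$ holds for $\tilde{\Ac}$ if and only if $\tilde{\Ac}$ is hyperbolic on $\Zplus$.

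It remains to identify hyperbolicity of $\tilde{\Ac}$ with hyperbolicity of $\Ac$. Telescoping the scalar factors, the Cauchy matrices satisfy $\tilde{\Phi}_{k,l}=\enbrace{\frac{k+1}{l+1}}^\omega\Phi_{k,l}$; since $c_k>0$, the invariance relations and the splitting $\Rb^d=S_k\oplus U_k$ coincide for both sequences, so the same projections can be used. If $\tilde{\Ac}$ is hyperbolic, dividing \eqref{hypdef:stable1} and \eqref{hypdef:unstable1} written for $\tilde{\Phi}$ by $\enbrace{\frac{k+1}{l+1}}^\omega$ yields exactly the weighted estimates \eqref{hypdef:stable2} and \eqref{hypdef:unstable2} for $\Phi$; by Remark \ref{rem:hyperbound_omega} these imply the standard estimates \eqref{hypdef:stable1} and \eqref{hypdef:unstable1}, while \eqref{hypdef:projbound} follows from Remark \ref{rem:projbound}, so $\Ac$ is hyperbolic. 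Conversely, if $\Ac$ is hyperbolic then $\enbrace{\frac{k+1}{l+1}}^\omega\leq(1+|k-l|)^\omega$ is polynomial in $|k-l|$ and is absorbed by replacing $\lambda$ with any larger $\lambda'<1$, so $\tilde{\Ac}$ is hyperbolic as well. Chaining the three equivalences proves the theorem.

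The step I expect to carry the real weight is the appeal to the unweighted discrete Maizel theorem; if it is not available in citable form for non-autonomous $\tilde{A}_k$, it must itself be proved, and there the genuine difficulty is the implication $B_0\Rightarrow$ hyperbolicity. The reverse implication is easy, either through the reduction or directly: one builds the bounded solution by the Green's-function formula $x_k=\sum_{j\leq k}\Phi_{k,j}P_j f_j-\sum_{j>k}\Phi_{k,j}Q_j f_j$ and estimates the weighted exponential sums by splitting at $j=k/2$ (near $j=k$ the weight is comparable to $(k+1)^{-\omega}$, away from it the geometric factor dominates). The forward implication instead requires extracting the dichotomy from admissibility: the open mapping theorem furnishes a bounded solution operator on $\Nc_\omega(\Zplus)$, from which one defines the stable subspace as the initial values whose solutions stay in $\Nc_\omega(\Zplus)$, constructs a complementary subspace, and derives the exponential estimates.
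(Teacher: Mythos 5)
Your proposal is correct, and it reaches the theorem by a genuinely different route than the paper. The paper works directly in the weighted space $\Nc_\omega(\Zplus)$: it builds the candidate solution from a Green's-function-type representation, proves technical lemmas bounding weighted sums of the form $\sum_u \lambda^{|k-u|}(|u|+1)^{-\omega}$ (machinery that is reused for the full-line Pliss-type theorem and again in the shadowing application), and extracts the dichotomy from admissibility carrying the weight through every estimate. You instead conjugate the problem away: since the weight $(k+1)^\omega$ is slowly varying, the substitution $y_k=(k+1)^\omega x_k$ turns \eqref{eq:nonhomogen} into the unweighted problem for $\tilde A_k=\enbrace{\frac{k+2}{k+1}}^\omega A_k$ with the scalar factors confined to $[1,2^\omega]$, so the hypotheses on $\normop{\tilde A_k}$ and $\normop{\tilde A_k^{-1}}$ survive; the telescoping identity $\tilde\Phi_{k,l}=\enbrace{\frac{k+1}{l+1}}^\omega\Phi_{k,l}$ then transfers hyperbolicity in both directions exactly as you argue (invoking Remarks \ref{rem:projbound} and \ref{rem:hyperbound_omega} one way, and absorbing the polynomial factor $\enbrace{1+(k-l)}^\omega$ into a larger $\lambda'<1$ the other way). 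Your one external dependency --- the classical unweighted discrete Maizel theorem on the half-line for non-autonomous bounded invertible $A_k$ --- is indeed available in citable form (it is the $\omega=0$ case of the very statement being proved and is standard in the difference-equations and shadowing literature), so this is not a gap, though it leaves your argument non-self-contained where the paper's is. What your reduction buys is brevity and a conceptual observation the paper's framing obscures: although $\Nc_\omega$ is neither homogeneous nor translation-invariant, for these polynomial weights admissibility is literally conjugate to the classical bounded case, so the hard implication $B_\omega\Rightarrow$ hyperbolicity needs no new analysis. What the paper's direct proof buys is self-containment and explicit weighted series estimates that are needed verbatim later (e.g.\ in Theorem \ref{thm:like131}).
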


We prove the following theorem in Section \ref{sec:plissthm}:

\begin{theorem} [a generalization of the discrete analog of Pliss Theorem] \label{thm:mainpliscor}
Let $I=\Zb$ and the norms of all matrices $A_k$ and $\inv{A_k}$ be bounded by $M>0.$ A sequence $\Ac$ has property $B_\omega(I)$ iff it is hyperbolic on both  $\Zplus$ and $\Zminus$ and the spaces $B^+(\Ac)$ and $B^-(\Ac)$ are transverse. Here
\begin{eqnarray*}
B^+(\Ac)=\setdef{v\in\Rb^d}{\vmod{\Phi_{k,0}v}\to 0,\ k\to +\infty}, \\
B^-(\Ac)=\setdef{v\in\Rb^d}{\vmod{\Phi_{k,0}v}\to 0,\ k\to -\infty}.
\end{eqnarray*}
\end{theorem}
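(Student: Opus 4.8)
The plan is to reduce the full-line statement to two applications of the half-line result, Theorem~\ref{thm:Maizelcor}, and then to glue the resulting half-line solutions across the origin. First I would note that Theorem~\ref{thm:Maizelcor}, stated for $\Zplus$, yields its counterpart on $\Zminus$ after the time reversal $y_j = x_{-j}$, $\tilde A_j = \inv{A_{-j-1}}$ $(j\geq 0)$: this turns the homogeneous equation on $\Zminus$ into one on $\Zplus$ whose coefficients again have norms bounded by $M$, it interchanges the stable and unstable spaces, and it preserves the weighted norm since $\sup_{j\geq 0}\vmod{x_{-j}}(j+1)^\omega = \normban{x}_\omega$ (the transformed inhomogeneity stays in $\Nc_\omega$ up to a bounded factor). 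I would also observe that, once hyperbolicity on the half-lines is known, $B^+(\Ac)$ is the stable space $S_0^+$ at $0$ of the dichotomy on $\Zplus$ and $B^-(\Ac)$ is the unstable space $U_0^-$ at $0$ of the dichotomy on $\Zminus$, because the complementary directions grow exponentially and cannot tend to $0$; thus transversality of $B^+(\Ac)$ and $B^-(\Ac)$ means exactly $S_0^+ + U_0^- = \Rb^d$.

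For the implication from hyperbolicity and transversality to $B_\omega(\Zb)$, I would split a given $f\in\Nc_\omega(\Zb)$ into its parts on the indices $k\geq 1$ and $k\leq 0$, which feed the half-line problems on $\Zplus$ and $\Zminus$ with no overlap. Theorem~\ref{thm:Maizelcor} and its $\Zminus$ analog produce solutions $x^+\in\Nc_\omega(\Zplus)$ and $x^-\in\Nc_\omega(\Zminus)$, which need not agree at $0$. The possible values at $0$ of an $\Nc_\omega$ solution on $\Zplus$ form the affine set $x^+_0 + S_0^+$, and on $\Zminus$ the set $x^-_0 + U_0^-$; transversality gives $x^+_0 - x^-_0 = u - s$ with $s\in S_0^+$, $u\in U_0^-$, so replacing $x^+_k$ by $x^+_k + \Phi_{k,0}s$ and $x^-_k$ by $x^-_k + \Phi_{k,0}u$ makes the two agree at $0$ while keeping each in its half-line space, because $\Phi_{k,0}s$ and $\Phi_{k,0}u$ decay exponentially. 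Since the weight $\enbrace{\vmod{k}+1}^\omega$ is anchored at $0$ for both half-lines, a sequence lies in $\Nc_\omega(\Zb)$ iff both restrictions lie in the corresponding half-line spaces, so the glued sequence is the required solution; the inhomogeneous equation at the junction $k=-1,0$ is checked directly.

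For the converse, assume $B_\omega(\Zb)$. Restricting global $\Nc_\omega$ solutions to the half-lines shows that $\Ac$ has $B_\omega(\Zplus)$ and $B_\omega(\Zminus)$, hence is hyperbolic on both by Theorem~\ref{thm:Maizelcor} and its time-reversed form. To get transversality I would argue by contradiction: if $S_0^+ + U_0^- \neq \Rb^d$, choose $w\notin A_0\enbrace{S_0^+ + U_0^-}$ and take the inhomogeneity with $f_1 = w$ and $f_k = 0$ otherwise, which lies in $\Nc_\omega(\Zb)$. Any $\Nc_\omega(\Zb)$ solution is homogeneous for $k\geq 1$ and for $k\leq -1$, so boundedness forward forces $x_1\in S_1^+ = A_0 S_0^+$ and boundedness backward forces $x_0\in U_0^-$; the junction equation $x_1 = A_0 x_0 + w$ then gives $w = x_1 - A_0 x_0 \in A_0\enbrace{S_0^+ + U_0^-}$, contradicting the choice of $w$. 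Hence no $\Nc_\omega$ solution exists, contradicting $B_\omega(\Zb)$, so $B^+(\Ac)$ and $B^-(\Ac)$ must be transverse.

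The hard part is not the gluing, which is forced once the weights are seen to behave well across the origin, but the faithful transfer of Theorem~\ref{thm:Maizelcor} to $\Zminus$ and the verification that the polynomial weights $\enbrace{\vmod{k}+1}^\omega$ never interfere with the exponential dichotomy estimates. The latter rests on the principle behind Remark~\ref{rem:hyperbound_omega}: exponential decay dominates any fixed polynomial factor, so for a hyperbolic system membership in $\Nc_\omega$ is equivalent to plain boundedness, which is precisely what lets the classical Pliss gluing survive in the weighted setting.
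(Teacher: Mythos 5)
Your proposal is correct and follows the same route the paper takes: deduce the half-line Perron property/hyperbolicity equivalence on $\Zplus$ and (via time reversal) on $\Zminus$ from Theorem \ref{thm:Maizelcor}, identify $B^+(\Ac)$ and $B^-(\Ac)$ with the stable space $S_0^+$ and unstable space $U_0^-$ at $0$, glue the two half-line solutions using transversality and the exponentially decaying homogeneous corrections (which dominate the polynomial weight), and for the converse restrict to the half-lines and use a one-point inhomogeneity to force $S_0^+ + U_0^- = \Rb^d$. All the weight-specific points that could cause trouble (behaviour of $(\vmod{k}+1)^\omega$ at the junction, exponential-beats-polynomial for the corrections, stability of the norm under time reversal and index shift) are correctly addressed.
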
 

\section{Generalizations of discrete analogs of theorems of Maizel and Pliss} \label{sec:discreteth}

We prove generalizations of theorems of Maizel and Pliss for the case of difference equations. 

\subsection{Maizel Theorem} \label{sec:maizelthm}

Let $I=\Zplus.$ For brevity we write $\Nc_\omega$ instead of $\Nc_\omega(I).$  Assume that the sequence $\Ac$ has property $B_{\omega}(I)$
\hl{ and choose the number $M$ that bounds norms of all $A_k$ and } $A^{-1}_k$
\hl{ large enough. The last means that, in particular, $2M>1.$ }

\medbreak

Denote
$$V_1 = \left\{ x_0\left| x\in \Nc_\omega,\ x \hbox{ is a solution of homogeneous equation \eqref{eq:homogen}} \right. \right\}.$$
Since equation \eqref{eq:homogen} is linear and $\Nc_\omega$ is a linear space, $V_1$ is also a linear space. Denote the orthogonal complement of $V_1$ by $V_2$ and orthogonal projection onto $V_1$ by $P.$

It is easy to see that the following holds:
\begin{statement} \label{statement:zeroel}
For any sequence $f\in \Nc_\omega$ there exists
\hl{exactly}
one solution $T(f)\in \Nc_\omega$ of inhomogeneous equation \eqref{eq:nonhomogen} with inhomogeneity $f$ such that $(T(f))_0\in V_2.$
\end{statement}

\begin{statement}  \label{statement:contoper}
For any sequence $\Ac$ the operator $T:\Nc_\omega \to \Nc_\omega$ from the previous statement is continuous. In particular there exists a positive $r$ such that
\begin{equation*} \normban{Tf}_\omega\leq r \normban{f}_\omega. \end{equation*}
\end{statement}

\begin{proof}
Fully analogous to the proof of Statement 4 from \cite{COPDFA}.
\end{proof}
\medskip

From now on we use the operator $T$ and number $r$ from the previous statement. Also we suppose that $r\geq 1$ and that the number $M$ from the statement of Theorem \ref{thm:Maizelcor} satisfies inequality $rM\geq1.$


\subsubsection{Technical lemmas}

Denote
$$
X_k = \begin{cases}
		\Phi_{k,0}, & k > 0,\\
		Id, & k=0,\\
		\Phi_{0,-k}, & k<0.\\
	\end{cases}
$$

It is easy to see that the following holds.
\begin{statement}
The following formula provides a solution of inhomogeneous difference equation \eqref{eq:nonhomogen}:
\begin{equation}
y_k=\sum^{k}_{u=0} \sfm{k}{u} f_u - \sum^{\infty}_{u=k+1} \ufm{k}{u}f_{u}, \label{eq:halfexactsol}
\end{equation}
when the series in the second summand converges. Here we take $f_0$ equal to $0.$
\end{statement}


\begin{rem}
The following formula can be interpreted as an analog of the Green's function for difference equations
$$
G_{k,u}=\begin{cases} \sfm{k}{u}, &0\leq u \leq k, \\
-\ufm{k}{u}, &0\leq k < u. \end{cases}
$$
So formula \eqref{eq:halfexactsol} can be rewritten in a more compact way:
\begin{equation}
y_k=\sum_{u=0}^{\infty} G_{k,u}f_{u}.
\label{eq:nonhomsolexplicithalf}
\end{equation}
\end{rem}
\medskip

\begin{lemmaa}
Let $k_0, k_1, k$ be nonnegative integers and $\xi\in \Rb^d$ be a nonzero vector.

Then the following inequalities hold
\begin{IEEEeqnarray}{rCll}
\vmod{X_kP\xi}\sumd{k_0}{k} (u+1)^{-\omega} \vmod{X_{u}\xi}^{-1} &\leq  &r(k+1)^{-\omega},\quad & 0\leq k_0\leq k,
\label{neq:tl1np}\\
\vmod{X_k(I-P)\xi}\sumd{k}{k_1} (u+1)^{-\omega} \vmod{X_{u}\xi}^{-1} &\leq & 2rM (k+1)^{-\omega},\quad & 0\leq k \leq k_1.
\label{neq:tl1nip}
\end{IEEEeqnarray}

\end{lemmaa}
\begin{proof}
Fix nonnegative integers $l_0, l_1$ such that $l_0\leq l_1.$
Consider a sequence $f$ with $f_i = 0,\ i > l_1.$ Then formula \eqref{eq:nonhomsolexplicithalf} looks like:
$$y_l=\sumd{0}{l_1} G_{l,u} f_u.$$
For $l\geq l_1$ all the indices $u$ in the sum are less or equal than $l_1$ and the first string from the definition of $G_{l,u}$ is used. The previous equality turn into the following:
$$y_l=X_l P\sumd{0}{l_1} X_{-u} f_u.$$
Thus the vector $y_l$ for $l\geq l_1$ is an image of the vector from $V_1$ that is independent of $l.$ This means that all the sequence $y$ except a finite number of entries is a solution of homogeneous equation \eqref{eq:homogen} with initial conditions from $V_1$. Thus $y$ belongs to $\Nc_\omega.$ Using that $f_0=0$ we obtain
\begin{equation*}
y_0=-(I-P)\sumd{0}{l_1} X_{-u} f_u \in V_2.
\end{equation*}
So $y=Tf$ and therefore $\normban{y}_\omega\leq r\normban{f}_\omega.$
Let $x_i=X_i\xi.$ We define the sequence $f:$
$$
f_i= \begin{cases} 0, & i < l_0,
\\ (i+1)^{-\omega}\displaystyle\frac{x_i}{\vmod{x_i}}, & l_0\leq i\leq l_1,
\\ 0, & i > l_1. \end{cases}
$$
Then $\normban{f}_\omega = 1.$ Substituting the formula for a solution in the inequality from Statement \ref{statement:contoper} we obtain
\begin{equation}
\vmod{ \sumd{l_0}{l_1} (u+1)^{-\omega} G_{l,u} \frac{x_u}{|x_u|} } = \vmod{y_l} \leq r (l+1)^{-\omega}. \label{neq:Gk0k1}
\end{equation}
For $l_1=l=k,\ l_0 = k_0$ from \eqref{neq:Gk0k1} we obtain that if $k\geq k_0$ then
$$
r (k+1)^{-\omega} \geq   \vmod{ \sumd{k_0}{k} (u+1)^{-\omega} G_{k,u} \frac{x_u}{|x_u|} } = \vmod{\sumd{k_0}{k} (u+1)^{-\omega} \sfm{k}{u} \frac{X_u\xi}{\vmod{X_u\xi}}}=
$$
$$
=\vmod{X_kP\xi\sumd{k_0}{k} (u+1)^{-\omega}\vmod{X_{u}\xi}^{-1}} =\vmod{X_kP\xi}\sumd{k_0}{k} (u+1)^{-\omega}\vmod{X_{u}\xi}^{-1}.
$$
To prove the second inequality from the statement of the lemma we do the similar. The important thing to notion here is that in the second string of the definition of $G_{k,s}$ the inequality is strict. Then for $l=k-1,\ l_0=k,\ l_1=k_1$ from \eqref{neq:Gk0k1} we obtain that for $0<k\leq k_1$ we have
$$
r k^{-\omega} \geq \vmod{ \sumd{k}{k_1} (u+1)^{-\omega} G_{k-1,u} \frac{x_u}{|x_u|} } = \vmod{\sumd{k}{k_1} -(u+1)^{-\omega}\ufm{k-1}{u}\frac{X_{u}\xi}{\vmod{X_{u}\xi}}}=
$$
$$
=\vmod{X_{k-1}(I-P)\xi\sumd{k}{k_1} (u+1)^{-\omega}\vmod{X_{u}\xi}^{-1}}
=\vmod{X_{k-1}(I-P)\xi}\sumd{k}{k_1} (u+1)^{-\omega}\vmod{X_{u}\xi}^{-1}=
$$
$$
=\vmod{A_{k-1}^{-1}X_{k}(I-P)\xi}\sumd{k}{k_1} (u+1)^{-\omega}\vmod{X_{u}\xi}^{-1} \geq $$
$$\geq \inv{\normop{A_{k-1}}} \vmod{X_{k}(I-P)\xi}\sumd{k}{k_1} (u+1)^{-\omega}\vmod{X_{u}\xi}^{-1}. $$
Now we prove the second inequality of the statement of the lemma for the case when $0=k < k_1$ using the previous inequality for $k=1:$
$$
\vmod{X_{0}(I-P)\xi}\sumd{0}{k_1} (u+1)^{-\omega}\vmod{X_{u}\xi}^{-1} = \vmod{X_{0}(I-P)\xi}\sumd{1}{k_1} (u+1)^{-\omega}\vmod{X_{u}\xi}^{-1} + \vmod{(I-P)\xi}\leq
$$
$$
\leq \normop{\inv{A_0}}\vmod{X_{1}(I-P)\xi}\sumd{1}{k_1} (u+1)^{-\omega}\vmod{X_{u}\xi}^{-1} + 1 \leq
r\normop{\inv{A_0}}+1.
$$
For $k=k_1=0$ the inequality is obvious.
\end{proof}

\medskip


\begin{lemmaa}
Let $k_0, k_1,$ $k, s$ be nonnegative integers and $\xi$ be a vector.

Denote
$$\mu = {1-\inv{(4rM)} }.$$

\noindent The following inequalities are satisfied:

if $P\xi\neq 0$ then
\begin{IEEEeqnarray}{c}
\sumd{k_0}{s} (u+1)^{-\omega} \vmod{X_uP\xi}^{-1} \leq  \mu^{k-s} \sumd{k_0}{k} (u+1)^{-\omega} \inv{\vmod{X_uP\xi}} \label{neq:lgnp}
\end{IEEEeqnarray}
for  $k  \geq s \geq k_0;$

if $(I-P)\xi\neq 0$ then
\begin{IEEEeqnarray}{c}
\sumd{s}{k_1} (u+1)^{-\omega} \vmod{X_u(I-P)\xi}^{-1} \leq \mu^{s-k} \sumd{k}{k_1} (u+1)^{-\omega} \vmod{X_u(I-P)\xi}^{-1} \label{neq:lgnip}
\end{IEEEeqnarray}
for  $k_1\geq s\geq k;$

\end{lemmaa}
\begin{proof}
\noindent Denote
$$\phi_i=\sumd{k_0}{i} (u+1)^{-\omega} \vmod{X_uP\xi}^{-1},\quad i\geq k_0.$$
$$\psi_i=\sumd{i}{k_1} (u+1)^{-\omega} \vmod{X_u(I-P)\xi}^{-1},\quad i\leq k_1,$$

We prove inequality \eqref{neq:lgnp}. Since $P\xi\neq 0,$ it is easy to see that $\phi_k>0.$

Also it is obvious that $\phi_k-\phi_{k-1}=(k+1)^{-\omega}\vmod{X_kP\xi}^{-1}.$ Thus replacing $\xi$ by $P\xi$ in \eqref{neq:tl1np} we get
$$
\frac{\phi_k}{\phi_k-\phi_{k-1}}\leq r\leq 2rM.
$$
Then
$$
(2rM)^{-1} \leq \frac{\phi_k-\phi_{k-1}}{\phi_k} = 1-\frac{\phi_{k-1}}{\phi_k}.
$$
Therefore $\phi_{k-1} \leq {\enbrace{1-(2rM)^{-1}}}\phi_k.$ If we consequently use this inequality enough times, we obtain
$$
\phi_s\leq \enbrace{1-(2rM)^{-1}} \dots \enbrace{1-(2rM)^{-1}} \phi_k = $$
$$= \enbrace{1-(2rM)^{-1}}^{k-s} \phi_k,\quad k \geq s.$$

To prove the second inequality from the statement of the lemma recall that $\psi_k>0.$ Analogous to the proof of the first inequality 
\hl{from} 
the statement of the lemma, $\psi_k-\psi_{k+1}=(k+1)^{-\omega}|X_k(I-P)\xi|^{-1}$ and replacing $\xi$ by $(I-P)\xi$ in \eqref{neq:tl1nip} we have
$$ \psi_{k+1} \leq \enbrace{1-\inv{(2rM)}k^{\omega}(k+1)^{-\omega}} \psi_k \leq $$
$$\leq \enbrace{1-\inv{(4rM)} } \psi_k. $$
Again, if we consequently use this inequality enough times, we get inequality \eqref{neq:lgnip}.
\end{proof} 
\subsubsection{Proof of the discrete analog of the Maizel theorem} \label{ssec:maizelproof}

\begin{theorem}\label{thm:rawMaizel}
The following inequalities holds
$$
\normop{\sfm{k}{s}}\leq r^2 (k+1)^{-\omega}(s+1)^{\omega} \mu^{k-s}
$$
for $0\leq s\leq k;$

$$
\normop{\ufm{k}{s}}\leq 2 r^2 M^2 (k+1)^{-\omega}(s+1)^{\omega} \mu^{s-k}
$$
for $0\leq k < s.$
\end{theorem}

\begin{proof}
Fix a natural 
\hl{number} 
$s\geq 1$ and a unit vector $\xi.$ Define a sequence $y:$
$$
y_k=\begin{cases} -\ufm{k}{s}\xi, & 0\leq k < s,\\
\sfm{k}{s}\xi, & k\geq s. \end{cases}
$$
The sequence $y$ coincides (except a finite number of entries) with a solution of homogenous equation \eqref{eq:homogen} with initial conditions from $V_1$ and therefore $y$ belongs to $\Nc_\omega.$ Now we define a sequence $f$ in such a way 
\hl{that}
the sequence $y$ is a solution of inhomogeneous equation \eqref{eq:nonhomogen} with inhomogeneity  $f:$
$$
f_k=\begin{cases} 0, & k\neq s,\\
\xi, & k=s. \end{cases}
$$
It is easy to see that in this case $y$ becomes a solution. This means that $y=Tf.$ Thus $\normban{y}_\omega\leq r\normban{f}_\omega = r(s+1)^{\omega}.$ We prove the first inequality for the operator norms from the statement of the theorem. Using the definition of the sequence $y$ we can write
$$ \vmod{ \sfm{k}{s} \xi } = \vmod{y_k} \leq r(k+1)^{-\omega}(s+1)^\omega, \quad s\leq k. $$
Since $\xi$ can be any unit vector, this gives us an estimate for the operator norms of $\sfm{k}{s}.$ Now we can replace 
\hl{$\xi$ }
by the solution of the homogeneous equation $x_k=X_k\xi$ and substitute in the previous inequality instead of $\xi:$
\begin{equation}
\vmod{ X_k P\xi } =\vmod{\sfm{k}{s}x_s}\leq r(k+1)^{-\omega}(s+1)^\omega\vmod{x_s}, \quad s\leq k.  \label{neq:seqel}
\end{equation}
Let $P\xi\neq 0.$ Consequently using inequalities \eqref{neq:tl1np} and \eqref{neq:lgnp} for $k_0=s$ and \eqref{neq:seqel} for $k=s$ we get
$$
\vmod{\sfm{k}{s}x_s}=\vmod{ X_k P\xi } \leq
 r(k+1)^{-\omega}\enbrace{\sum_{u=s}^k (u+1)^{-\omega} \vmod{ X_u P\xi }^{-1}}^{-1}\leq$$
$$\leq r(k+1)^{-\omega}\inv{\enbrace{ \mu^{-(k-s)} (s+1)^{-\omega} \inv{\vmod{X_sP\xi}} }}= $$
$$ = r(k+1)^{-\omega} \mu^{k-s} (s+1)^{\omega} \vmod{X_sP\xi}\leq$$
$$\leq r^2 (k+1)^{-\omega}(s+1)^{\omega} \mu^{k-s}\vmod{x_s}.$$
If $P\xi = 0$ then the resulting inequality is obvious. Since $x_s=X_s\xi$ and $X_s$ is an isomorphism, we have an estimate for the operator norm:
$$
\normop{\sfm{k}{s}}\leq r^2 (k+1)^{-\omega}(s+1)^{\omega} \mu^{k-s},\quad 1\leq s\leq k.
$$
In this reasoning we have used only the fact that inequality \eqref{neq:seqel} is satisfied for $s=k.$ This is also true for $s=k=0$ since $\normop{P}\leq 1.$ Therefore, we proved the estimate for $0\leq s\leq k.$

The proof of the second estimate from the statement for $s>k$ is similar to the proof of the first estimate. The only small differences are due to the fact that now we cannot use an analog of inequality \eqref{neq:seqel} for $k=s$ because in the definition of the sequence $y$ the numbers $s$ and $k$ cannot be equal. The following inequality can be proved in a very same manner as one in the proof of the first estimate:
\begin{equation*}
\vmod{ X_k (I-P)\xi } =\vmod{\ufm{k}{s}x_s}\leq r(k+1)^{-\omega}(s+1)^\omega\vmod{x_s}, \quad s > k.  
\end{equation*}
For $k=s-1$ we multiply the vector inside the norm brackets by $A_{s-1}:$
$$
\vmod{X_s(I-P)\xi}=\vmod{A_{s-1}\ufm{s-1}{s}x_{s}} \leq
$$
\begin{equation}
\leq \normop{A_{s-1}}\vmod{\ufm{s-1}{s}x_{s}} \leq r M (s+1)^\omega s^{-\omega} \vmod{x_s}. \label{neq:seqel2}
\end{equation}
After that the proof is fully analogous to the proof of the first estimate.
\end{proof}

\begin{lemmaa} \label{lemma:seriesbound}
For any $\lambda\in (0,1)$ there exists a constant $C>0$ depending only on $\lambda$ and $\omega$ such that
\begin{equation} \label{neq:seriessol}
(k+1)^{\omega} \enbrace{ \sum_{u=0}^k \lambda^{k-u} (u+1)^{-\omega} + \sum_{u=k+1}^{\infty} \lambda^{u-k} (u+1)^{-\omega}}  < C
\end{equation}
for any $k\geq 0.$
\end{lemmaa}

\begin{proof}
We estimate first summand from \eqref{neq:seriessol}. To do this it is enough to estimate the corresponding integral:
$$
(k+1)^{\omega} \int_0^k \lambda^{k-u} (u+1)^{-\omega} du =
$$
$$ = (k+1)^{\omega} \enbrace{ \int_0^{\half{k}} \lambda^{k-u} (u+1)^{-\omega} du + \int_{\half{k}}^k \lambda^{k-u} (u+1)^{-\omega} du }.
$$
Now we estimate separately the two integrals from the previous formula. The first one can be estimated in the following way:
$$
(k+1)^{\omega} \int_0^{\half{k}} \lambda^{k-u} (u+1)^{-\omega} du \leq
(k+1)^{\omega} \int_0^{\half{k}} \lambda^{k-u} du \leq
$$
$$
\leq (k+1)^{\omega} \int_0^{\half{k}} \lambda^{\half{k}} du =
\half{1} k(k+1)^{\omega} \lambda^{\half{k} } \leq C_0.
$$
The second one can be estimated in the following way:
$$
(k+1)^{\omega} \int_{\half{k}}^k \lambda^{k-u} (u+1)^{-\omega} du \leq
\int_{\half{k}}^k \lambda^{k-u} \enbrace{\frac{k+1}{\half{k}+1}}^{-\omega} du \leq
$$
$$
\leq C_1 \lambda^k \int_{\half{k}}^{k} \lambda^{-u} du = C_2 \lambda^k \enbrace{\lambda^{-k} - \lambda^{-\half{k}}} = C_2 \enbrace{1 - \lambda^{\half{k}}} \leq C_3.
$$

Here is the estimate for the second summand from \eqref{neq:seriessol}:
$$
(k+1)^{\omega} \sum_{u=k+1}^{\infty} \lambda^{u-k} (u+1)^{-\omega} \leq \sum_{u=k+1}^{\infty} \lambda^{u-k} < C_4.
$$

\end{proof}

Now we prove Theorem \ref{thm:Maizelcor}. We show how property $B_\omega(\Zplus)$ implies hyperbolicity.
Let
$$ K=2r^2M^2,\ \lambda = (1-(4rM)^{-1}),\ P_l=X_lPX_{-l},\ Q_l=X_l(I-P)X_{-l}.$$
Using Theorem \ref{thm:rawMaizel} it is easy to check that the first two conditions from the definition of hyperbolicity and inequalities from Remark \ref{rem:hyperbound_omega} are satisfied. The uniform estimates of the norms of the projectors $P_k$ and $Q_k$ are due to Remark \ref{rem:projbound}.

Now we show how hyperbolicity implies $B_\omega(\Zplus).$ Let $\normban{f}_{\omega} < R.$ We define a sequence $y_k$ as follows:
$$
y_k=\sum^{k}_{u=0} \Phi_{k,u} P_u f_u - \sum^{\infty}_{u=k+1} \Phi_{k,u} Q_u f_{u}.
$$
Let $K,\lambda$ be the numbers from the definition of hyperbolicity of the sequence $\Ac.$ Then using the Lemma \ref{lemma:seriesbound} we write this estimates:
$$
(k+1)^{\omega}\vmod{y_k} \leq R (k+1)^{\omega} \enbrace{\sum^{k}_{u=0} (u+1)^{-\omega} \normop{\Phi_{k,u} P_u} + \sum^{\infty}_{u=k+1} (u+1)^{-\omega} \normop{\Phi_{k,u} Q_u} } \leq$$
$$\leq RK (k+1)^{\omega} \enbrace{\sum^{k}_{u=0} \lambda^{k-u} (u+1)^{-\omega}  + \sum^{\infty}_{u=k+1} \lambda^{u-k} (u+1)^{-\omega} } < C.
$$

\subsection{Pliss Theorem} \label{sec:plissthm}

Let $I=\Zb$ and $\omega\geq 0.$ We assume that the norms of $A_k$ and $\inv{A_k}$ are bounded.

\begin{statement} \label{statement:hyperZ}
If a sequence $\Ac$ have property $B_\omega (\Zb)$ then it is hyperbolic on $\rayp$ and $\rayn$ with corresponding stable and unstable spaces $S_k^+,\ U_k^+$ and $S_k^-,\ U_k^-.$
\end{statement}
\begin{proof}
Since we have property $B_\omega(\Zb)$ for the sequence $\Ac,$ we also have properties $B_\omega(\rayp)$ for its positive part $\{A_k\}_{k=0}^{\infty}$ and $B_\omega(\rayn)$ for its negative part  $\{A_k\}_{k=-\infty}^{0}.$ Then the hyperbolicity on $\rayp$ follows directly from the Maizel theorem. In particular this means that there exist stable and unstable subspaces $S^+_k$ and $U^+_k.$ Hyperbolicity on  $\rayn$ also follows from Maizel theorem but it should be applied not to equations \eqref{eq:homogen} and \eqref{eq:nonhomogen} but to the equations with inverted time:
\begin{IEEEeqnarray}{rCll}
x_{k}&=&{A_k^{-1}} x_{k+1},\qquad &k\in\rayp, \nonumber \\
x_{k+1}&=&A_k^{-1} x_{k+1} - {A_k^{-1}}f_{k+1},\qquad &k\in\rayp \nonumber.
\end{IEEEeqnarray}
Thus hyperbolic sequence $\{A_{-k}^{-1}\}_{k=0}^\infty$ has spaces $\widetilde{S}_k$ and $\widetilde{U}_k.$ We denote $U_k^-=\widetilde{S}_k$ and $S_k^{-}=\widetilde{U}_k$ keeping in mind the sequence $\{A_k\}_{k=-\infty}^{0}.$

\end{proof}


\begin{statement} \label{statement:plissb2hyp}
If a sequence $\Ac$ have property $B_\omega(\Zb)$ then it is hyperbolic both on $\rayp$ and $\rayn$ and spaces $S^+_0$ and $U^-_0$ from Statement \ref{statement:hyperZ} are transverse.
\end{statement}

\begin{proof}

Let $S^+_0$ and $U^-_0$ be 
\hl{nontransverse.} 
Then there exists a vector $x$ such that $x\neq y_1 + y_2,$ where $y_1\in S^+_0,\ y_2\in U^-_0.$
We know that $S^+_0\bigcap U^+_0 = 0,\ S^+_0 + U^+_0 = \Rb^n$ therefore $x$ can be represented as  $x=\zeta+\eta$ with $\zeta\in S^+_0$ and $\eta\in U^+_0.$ Thus $\eta\neq z_1 + z_2$ for $z_1\in S^{+}_0,$ $z_2\in U^-_0.$ Take a sequence of numbers $a_k$ whose entries equal $0$ for negative indices and are in $(0,1)$ for nonnegative. We construct a sequence $\theta_k$ that will 
\hl{lead} 
us to a contradiction:

$$\theta_k=-\sum_{i=k+1}^\infty \Phi_{k,i}f_{i},\ k\in\Zb,$$
where

$$
f_i= a_i (i+1)^{-\omega} \frac{\Phi_{i,0}\eta}{|\Phi_{i,0}\eta|}.
$$

Vectors $f_i$ belong to $U_{i}^+,\ i\geq 0,$ because $\Phi_{i,j}$ maps $U^+_j$ to $U^+_i$
by the hyperbolicity definition. The series from the definition of $\theta_k$ converges:
$$
\left|\sum_{i=k+1}^\infty \Phi_{k,i}f_{i}\right|\leq\sum_{i=\max(0,k)+1}^\infty |\Phi_{k,i}f_{i}|
\leq \sum_{i=\max(0,k)+1}^\infty C|\eta|\lambda^{i-k} (i+1)^{-\omega} \leq
$$

$$
\leq C\vmod{\eta}\sum_{l=1}^\infty \lambda^{l} \leq C_1
$$

Recall that the sequence $\{ \theta_k \}_{k\in\rayp}$ belongs to $\Nc_\omega(\rayp):$
$$
(k+1)^{\omega} \left|\sum_{i=k}^\infty \Phi_{k,i}f_{i}\right|
\leq  (k+1)^{\omega}\sum_{i=\max(0,k)+1}^\infty |\Phi_{k,i}f_{i}| \leq
$$

$$
\leq (k+1)^{\omega}\sum_{i=\max(0,k)+1}^\infty (i+1)^{-\omega} \normop{\Phi_{k,i} \arrowvert_{U_{i}^+} } \vmod{\eta}
\leq C_2 \vmod{\eta} \sum_{i=\max(0,k)+1}^\infty \lambda^{i-k} \leq
$$

$$
\leq C_2\vmod{\eta}\sum_{l=1}^\infty \lambda_1^{l} \leq C_3.
$$

It is easy to see that the sequence $\theta_k$ is a solution of the inhomogeneous equation  \eqref{eq:nonhomogen}.
Moreover, the following equality is satisfied
\begin{equation*}
\theta_0 = -\sum_{i=1}^\infty \Phi_{0,i}f_{i}=C_4\eta. 
\end{equation*}

This means that for $\theta_0$ the same thing as for $\eta$ is true
\begin{equation} \label{neq:thetaneq}
\theta_0\neq y_1 + y_2,
\end{equation}
 with $y_1\in S^+_0,\ y_2\in U^-_0.$


Due to the property $B_\omega (\Zb)$ there exists a solution $\seqz{\psi}{k}\in\Nc_\omega(\Zb)$ of inhomogeneous equation \eqref{eq:nonhomogen} with inhomogeneity $f.$ Since all entries of $f$ with negative indices are equal to zero, the nonpositive part of $\seqz{\psi}{k}$ is a solution of the homogeneous equation \hl{ and therefore its entry at zero should belong to $U^-_0.$ }

Every two solutions of the inhomogeneous equation differ by a solution of the homogeneous equation.
Thus $\seq{\psi_k}_{k\in\Zb}=\seq{ X_k(\psi_0-\theta_0)+\theta_k }_{k\in\Zb}.$
So we have $\seq{ X_k(\psi_0-\theta_0) }_{k\in\rayp} = \seq{ \psi_k }_{k\in\rayp} - \seq{ \theta_k }_{k\in\rayp} \in \Nc_\omega(\rayp)$ since $\Nc_\omega(\rayp)$ is a linear space.
From this we obtain that the vector $\psi_0-\theta_0$ belongs to $S^+_0.$
Therefore if we denote $y_1=\theta_0-\psi_0,\ y_2=\psi_0$ then we have $\theta_0=y_1+y_2,$ 
\hl{which} 
contradicts inequality \eqref{neq:thetaneq}.
\end{proof}

Now we prove Theorem \ref{thm:mainpliscor}.

At first we show how the existence of property $B_\omega(\Zb)$ follows from the hyperbolicity on $\rayp$ and $\rayn$ and transversality of $B^+(\Ac)$ and $B^-(\Ac).$ Fix a sequence $f\in \Nc_\omega(\Zb).$ Consider its positive and negative parts
$$ f^+ = \seq{f_k}_{k\in\rayp},\quad f^- = \seq{f_k}_{k\in\rayn}. $$
Since the sequence $\Ac$ is hyperbolic on both $\rayp$ and $\rayn,$ by the Maizel theorem its positive and negative parts $\Ac^+ = \seq{A_k}_{k\in\rayp}$ and $\Ac^- = \seq{A_k}_{k\in\rayn}$ have properties   $B_\omega(\rayp)$ and $B_\omega(\rayn)$ correspondingly. Thus there exist solutions $\psi^+\in\Nc_\omega (\rayp)$ and $\psi^-\in\Nc_\omega (\rayn)$ of equations \eqref{eq:nonhomogen} for $I=\rayp$ and $I=\rayn$ with inhomogeneities $f^+$ and $f^-$ correspondingly. If $\psi^+_0 = \psi^-_0$ then the sequence $\psi$ with
$$\psi_k = \psi^-_k,\ k\leq 0,\qquad \psi_k = \psi^+_k,\ k > 0$$
is a solution of the inhomogeneous system \eqref{eq:nonhomogen} for $I=\Zb$ and belongs to $\Nc_\omega(\Zb).$
If $\psi^+_0 \neq \psi^-_0$ then the solutions $\psi^+$ and $\psi^-$ can be modified by solutions of the homogeneous systems: we show that there exist solutions $\phi^+\in\Nc_\omega (\rayp)$ and $\phi^-\in\Nc_\omega (\rayn)$ of the homogeneous system \eqref{eq:homogen} for $I=\rayp$ and $I=\rayn$ such that $\psi_0^+ + \phi_0^+ = \psi_0^- + \phi_0^-.$
The last condition can be rewritten as
\begin{equation} \label{eq:zeroelagree}
\psi_0^+ - \psi_0^- = \phi_0^- - \phi_0^+.
\end{equation}

Recall that $B^+(\Ac)=S^+_0$ and $B^-(\Ac)=U^-_0$ since for a hyperbolic sequence solutions of the corresponding homogeneous linear system of difference equations are either tend to infinity with exponential speed or tend to zero with exponential speed.

By assumption the spaces $B^+(\Ac)$ and $B^-(\Ac)$ are transverse so every vector from $\Rb^d$ can be represented as a difference from the right hand sid of \eqref{eq:zeroelagree}. In particular we can obtain the left hand side of \eqref{eq:zeroelagree}.

To obtain hyperbolicity and transversality from property $B_{\gamma}$ we only need to use Statement \ref{statement:plissb2hyp} and the fact that $B^+(\Ac)=S^+_0$ and $B^-(\Ac)=U^-_0.$

\section{Application of The Generalization of Discrete Analog of Pliss Theorem} \label{sec:sublinsh}

In this section we apply the generalized version of Pliss theorem for difference equations in shadowing theory.

\subsection{Definitions}

Let $f$ be a homeomorphism of a metric space $(M,\dist)$ and consider a dynamical system that is generated by $f.$

\begin{deff}
We say that a sequence $\seqz{x}{k}$ of points of $M$ is a $d$-pseudotrajectory of the dynamical system $f$ if the following inequalities are satisfied
\begin{equation*}
\dist(x_{k+1},f(x_k))<d,\quad k\in\Zb.
\end{equation*}
\end{deff}

Let $\gamma$ be a nonnegative real number.

\begin{deff}
We say that a sequence $\seqz{x}{k}$ of points of $M$ is a $\gamma$-decreasing $d$-pseudotrajectory of the dynamical system $f$ if the following inequalities are satisfied
\begin{eqnarray*}
\dist(x_{k+1},f(x_k))& < &d(|k|+1)^{-\gamma},\quad k\in\Zb.
\end{eqnarray*}
\end{deff}

\begin{deff}
We say that the homeomorphism $f$ has Lipschitz two-sided limit shadowing property with exponent $\gamma$ if there exist positive constants $d_0,L$ such that for any $\gamma$-decreasing $d$-pseudotrajectory $\{x_k\}$ with $d\leq d_0$ there exists a point $p\in M$ such that
\begin{eqnarray*}
\dist(x_{k},f^k(p)) &\leq & Ld(|k|+1)^{-\gamma},\quad k\in\Zb.
\end{eqnarray*}
We write $f\in LTSLmSP(\gamma)$ in this case.
\end{deff}

\begin{rem}
In \cite{PILSDS}, where some similar shadowing properties has been studied\hl{, it} is shown that in the neighborhood of a hyperbolic set both $L_p$-shadowing and weighted shadowing (for a special choice of weights) are present.
\end{rem} 
\subsection{Main results}

A diffeomorphism $f$ of a smooth manifold $M$ is said to be structurally stable if there exists a neighborhood $U$ of the diffeomorphism $f$ in the $C^1$-topology such that any diffeomorphism $g\in U$ is topologically conjugate to $f.$

\begin{theorem} \label{thm:shadeqss}
Let $\gamma \geq 0$ and $f$ be a diffeomorphism of a closed Riemannian manifold $M.$ Then $f$ is structurally stable iff $f\in LTSLmSP(\gamma).$
\end{theorem}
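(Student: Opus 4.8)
The plan is to pass from the nonlinear shadowing property to the linear admissibility property $B_\gamma(\Zb)$ for the variational equations along trajectories, and then to invoke the classical characterization of structural stability due to Ma\~n\'e, Robbin and Robinson (see also the work of Sakai). Concretely, fix $\omega=\gamma$. For a complete trajectory $\seqz{x}{k}$ of $f$ put $A_k=Df(x_k)$ and let $\Ac=\seq{A_k}$; since $M$ is compact and $f$ is a diffeomorphism, $\normop{A_k}$ and $\normop{\inv{A_k}}$ are bounded by a common $M>0$, so Theorem \ref{thm:mainpliscor} applies to $\Ac$. The bridge I will use is Ma\~n\'e's theorem: $f$ is structurally stable if and only if for \emph{every} trajectory the variational sequence $\Ac$ is hyperbolic on both $\Zplus$ and $\Zminus$ with transverse subspaces $B^+(\Ac)$ and $B^-(\Ac)$, the hyperbolicity constants being uniform in the trajectory. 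By Theorem \ref{thm:mainpliscor} this is exactly the assertion that every such $\Ac$ has property $B_\gamma(\Zb)$. Thus it suffices to prove that $f\in LTSLmSP(\gamma)$ is equivalent to the statement ``every variational sequence has property $B_\gamma(\Zb)$''.

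For the implication from structural stability to $LTSLmSP(\gamma)$ I would realize the shadowing problem as a nonlinear equation in a weighted sequence space and solve it by a contraction argument. Given a $\gamma$-decreasing $d$-pseudotrajectory $\seqz{x}{k}$, write the candidate shadowing trajectory as $f^k(p)=\exp_{x_k}(z_k)$ with $z_k\in T_{x_k}M$. Expressing the orbit relation $f(f^k(p))=f^{k+1}(p)$ in these coordinates yields an equation of the form $z_{k+1}=A_k z_k+g_{k+1}+R_k(z)$, where $g$ encodes the pseudotrajectory errors and hence satisfies $\normban{g}_\gamma\leq Cd$, while $R_k$ collects the higher-order terms and is quadratically small in $\normban{z}_\gamma$. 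Property $B_\gamma(\Zb)$, together with the uniform hyperbolicity constants, provides a bounded right inverse $G$ of the operator $(Lz)_k=z_{k+1}-A_k z_k$ on $\Nc_\gamma(\Zb)$ whose norm is bounded uniformly over all trajectories; rewriting the equation as a fixed point $z=G(g+R(z))$ and using the quadratic smallness of $R$, I obtain a contraction on a small ball of $\Nc_\gamma(\Zb)$ once $d\leq d_0$. Its fixed point gives the desired $p$ with $\dist(x_k,f^k(p))\leq Ld(|k|+1)^{-\gamma}$.

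For the converse, that $f\in LTSLmSP(\gamma)$ implies structural stability, I would recover linear solutions of the inhomogeneous equation as limits of rescaled shadowing corrections. Fix a trajectory and its variational sequence $\Ac$, and take any $g\in\Nc_\gamma(\Zb)$ with $\normban{g}_\gamma=1$. For small $t>0$ build a $\gamma$-decreasing $(Ct)$-pseudotrajectory $\seq{x_k^{(t)}}$ whose errors equal $tg_{k+1}$ up to parallel transport; by $LTSLmSP(\gamma)$ there is a shadowing point $p^{(t)}$ with $\vmod{\exp_{x_k^{(t)}}^{-1}(f^k(p^{(t)}))}\leq LCt(|k|+1)^{-\gamma}$. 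Setting $z_k^{(t)}=t^{-1}\exp_{x_k^{(t)}}^{-1}(f^k(p^{(t)}))$ gives a family uniformly bounded in $\Nc_\gamma(\Zb)$, and a diagonal compactness argument produces a limit $z=\lim_{t\to 0}z^{(t)}$ solving the linearized relation $z_{k+1}=A_k z_k-g_{k+1}$ with $\normban{z}_\gamma\leq LC$. Since $g$ was arbitrary (and $-g$ is admissible as well), $\Ac$ has property $B_\gamma(\Zb)$; as the trajectory was arbitrary, Theorem \ref{thm:mainpliscor} and Ma\~n\'e's theorem yield structural stability.

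The main obstacle I anticipate is the converse direction, specifically the passage to the limit $t\to 0$: one must verify that the errors inserted into the nonlinear pseudotrajectory linearize \emph{exactly} into the prescribed inhomogeneity $g$, and that the rescaled corrections converge, after a diagonal extraction, to a genuine solution of the linear equation with the weighted bound $\normban{z}_\gamma$ preserved in the limit. A secondary but essential point, needed for the forward direction, is that the hyperbolicity constants furnished by Ma\~n\'e's theorem, and hence the norm of the right inverse $G$ on $\Nc_\gamma(\Zb)$, are uniform over all trajectories; this uniformity is what makes $d_0$ and $L$ in the definition of $LTSLmSP(\gamma)$ independent of the pseudotrajectory, and it rests on the compactness of $M$ together with the uniform dichotomy estimates of the type recorded in Remark \ref{rem:hyperbound_omega}.
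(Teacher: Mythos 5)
Your overall skeleton matches the paper's --- Ma\~n\'e's theorem plus Theorem \ref{thm:mainpliscor} for one direction, a fixed-point argument in a weighted sequence space for the other --- but both directions as you execute them have genuine gaps, and the more serious one is in the implication $LTSLmSP(\gamma)\Rightarrow{}$structural stability. You propose to build, for each $g\in\Nc_\gamma(\Zb)$, a pseudotrajectory on all of $\Zb$ whose errors equal $tg_{k+1}$, and to recover a solution of the linear equation as a rescaled limit $t\to 0$. But such a pseudotrajectory is not anchored to the reference orbit $p_k=f^k(p)$: inserting the errors step by step, the points $x_k^{(t)}$ can drift arbitrarily far from $p_k$ as $|k|\to\infty$ (at this stage you have no hyperbolicity to control the accumulated perturbation), so they need not stay in the normal neighborhoods of $p_k$, the relevant linearization need not be $A_k=Df(p_k)$, and the rescaled corrections need not compare to anything living over the orbit of $p$. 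The paper avoids this by truncating: for each $N$ it solves the linear equation on $[-N,N]$ with $a_{-N}=0$, gets the a priori bound $|a_k|\le C(N)$, chooses $d$ small depending on $C(N)$ so that $\xi_k=\exp_{p_k}(da_k)$ stays in the chart, shadows the resulting compactly supported perturbation, and then uses the exact algebraic identity that $v_k=a_k-b_k/d$ (with $b_k=\Phi_{k,-N}t_{-N}$ the homogeneous solution through the shadowing point) solves the inhomogeneous linear equation \emph{exactly} on the window, with the bound $(8L+1)(|k|+1)^{-\gamma}$ independent of $N$. No limit in $d$ or $t$ is ever taken; the passage to $\Zb$ is the diagonal Lemma \ref{lemma:finb2infb}. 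That exact cancellation at fixed small $d$ is the idea your $t\to 0$ sketch is missing, and it is what makes the $C^1$-only error estimates sufficient.

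In the forward direction, deriving the right inverse $G$ from ``property $B_\gamma(\Zb)$ on every exact trajectory, with uniform constants'' is both unjustified and not quite the right object: Theorem \ref{thm:mainpliscor} applied orbit by orbit gives no uniformity of the dichotomy constants, and the linearization in the shadowing equation runs along the pseudotrajectory, not along an exact orbit. The paper instead imports from \cite{PILSDS} the fact that a structurally stable diffeomorphism admits property (C) (a uniform splitting with uniformly bounded projections) along the relevant sequences, and its only new contribution is Theorem \ref{thm:like131}: the explicit Green-type operator $G$ built from that splitting is bounded on the weighted space $\Nc_\gamma(\Zb)$ (the estimate via Lemma \ref{lemma:seriesbound}), after which the contraction argument of Theorem 1.3.1 of \cite{PILSDS} goes through verbatim. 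Finally, note that Ma\~n\'e's theorem as used here is only the analytic transversality condition $B^+(p)+B^-(p)=T_pM$ at every point; your stronger ``if and only if'' formulation with uniform two-sided hyperbolicity is neither what is invoked nor needed --- only the transversality part of the conclusion of Theorem \ref{thm:mainpliscor} is used.
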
 
\subsection{Lipschitz two-sided limit shadowing property implies structural stability}

We use one well-known result of R. Mane.
Let $M$ be a closed Riemannian manifold and $f$ be a diffeomorphism of $M.$ We denote the tangent space to the manifold $M$ at a point $p$ by $T_pM.$ Fix a point $p\in M$ and consider two linear subspaces of $T_pM$:
\begin{eqnarray*}
B^+(p)&=& \setdef{v\in T_pM}{\vmod{Df^k(p)v}\to 0,\quad k\to +\infty},\\
B^-(p)&=&\setdef{v\in T_pM}{\vmod{Df^k(p)v}\to 0,\quad k\to -\infty}.
\end{eqnarray*}

\begin{deff}
We say that for a diffeomorphism $f$ the analytical transversality condition is satisfied at a point $p$ if
\begin{equation*}
B^+(p)+B^-(p)=T_pM.
\end{equation*}
\end{deff}

\begin{theoremnon}[Ma\~n\'e, \cite{MANECASD}]
Diffeomorphism $f$ is structurally stable iff the analytical transversality condition is satisfied at every point $p$ of $M.$
\end{theoremnon}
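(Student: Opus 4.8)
The plan is to prove the equivalence by routing both implications through the classical characterization of $C^1$ structural stability as the conjunction of Axiom A and the strong transversality condition (sufficiency due to Robbin and Robinson, necessity being the content of the $C^1$ stability theorem). This reduces the statement to the purely infinitesimal claim that Axiom A together with strong transversality holds if and only if $B^+(p)+B^-(p)=T_pM$ for every $p\in M$. The reframing isolates the analytic condition as an orbitwise statement about the derivative cocycle $A_k=Df(f^k(p))$, which is exactly the setting of the generalized Pliss theorem (Theorem \ref{thm:mainpliscor}): along the orbit of $p$, the subspaces $B^+(\mathcal A)$ and $B^-(\mathcal A)$ coincide with the spaces $B^+(p)$ and $B^-(p)$ defined here.

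For the direction from structural stability to the analytic condition I would argue as follows. Assuming Axiom A and strong transversality, the forward orbit of any point $p$ is attracted to a hyperbolic basic set and its backward orbit repelled from another; uniform hyperbolic estimates on the corresponding stable and unstable manifolds identify $B^+(p)$ with $T_pW^s(p)$ and $B^-(p)$ with $T_pW^u(p)$. The strong transversality condition asserts precisely that these two tangent spaces span $T_pM$, which is the desired identity $B^+(p)+B^-(p)=T_pM$. The only care needed is to verify that a forward-contracting vector must be tangent to $W^s(p)$, i.e.\ that no ``slow'' direction survives; this follows from the uniform hyperbolicity of the basic sets.

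The hard direction is to deduce hyperbolicity from the pointwise transversality condition. Orbitwise, $B^+(p)+B^-(p)=T_pM$ is, by the generalized Pliss theorem, equivalent to the sequence $\mathcal A=\{Df(f^k(p))\}$ being hyperbolic on both $\Zplus$ and $\Zminus$ with transverse stable subspaces at $0$, hence to the Perron (admissibility) property along that orbit. The genuine difficulty is to upgrade these \emph{orbitwise} dichotomies, whose constants $K$ and $\lambda$ a priori depend on $p$, into a single \emph{uniform} exponential dichotomy over the nonwandering set, that is, Axiom A. I expect this to be the main obstacle. It cannot be settled by admissibility theory alone; it requires the compactness of $M$ together with Mañé's deeper machinery — semicontinuity of the candidate splitting, the ergodic closing lemma to transfer hyperbolicity from periodic approximations, and the analysis ruling out a dominated-but-non-hyperbolic cocycle. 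Once uniform hyperbolicity of the nonwandering set is established, strong transversality of the global invariant manifolds is recovered from the orbitwise transversality via the same identification of $B^\pm(p)$ with $T_pW^s(p)$ and $T_pW^u(p)$, and the sufficiency half of the $C^1$ stability theorem then yields structural stability.
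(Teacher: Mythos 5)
First, a point of comparison: the paper does not prove this statement at all --- it is imported verbatim as Ma\~n\'e's theorem from \cite{MANECASD} and used as a black box (and only in the direction ``analytical transversality at every point implies structural stability''). So the only question is whether your sketch stands on its own, and it does not: its pivotal step is false. You claim that, orbitwise, $B^+(p)+B^-(p)=T_pM$ is, by the generalized Pliss theorem (Theorem \ref{thm:mainpliscor}), \emph{equivalent} to hyperbolicity of $\Ac=\{Df(f^k(p))\}$ on both $\Zplus$ and $\Zminus$ with transverse subspaces, hence to admissibility along the orbit. Theorem \ref{thm:mainpliscor} characterizes admissibility by the \emph{conjunction} of two conditions, and the analytical transversality condition is only one of the two conjuncts; it does not imply the other. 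Concretely, take $d=1$, $A_k=(k+1)/(k+2)$ for $k\geq 0$ and $A_k=1$ for $k<0$ (norms of $A_k$ and $\inv{A_k}$ bounded by $2$). Then $\Phi_{k,0}=1/(k+1)\to 0$, so $B^+(\Ac)=\Rb$ and $B^+(\Ac)+B^-(\Ac)=\Rb$ trivially, yet $\Ac$ is hyperbolic on neither half-line (the forward decay is subexponential, the backward dynamics is the identity) and has no admissibility property. Only the implication [hyperbolic on both rays and transverse] $\Rightarrow$ [analytical transversality] holds, and that one-way implication is exactly what the paper uses: shadowing gives admissibility, Pliss gives hyperbolicity plus transversality, which gives the analytical condition, and \emph{then} Ma\~n\'e's theorem is invoked. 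Your reading would make Ma\~n\'e's theorem a near-corollary of the Pliss-type theorem; it is not --- the entire depth lies in passing from the weak pointwise condition, imposed at \emph{all} points of a compact manifold simultaneously, to uniform hyperbolicity, and that passage cannot even be started one orbit at a time.

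Beyond this error, what remains of the proposal is a map of the literature rather than a proof: both implications are outsourced to Robbin--Robinson, to the $C^1$ stability theorem, and to ``Ma\~n\'e's deeper machinery,'' which together \emph{are} the content of the theorem being proven --- precisely why the paper cites it instead of proving it. The easy direction you sketch (Axiom A plus strong transversality gives $B^+(p)=T_pW^s(p)$, $B^-(p)=T_pW^u(p)$, hence spanning) is essentially correct. A minor historical quibble: the ergodic closing lemma belongs to the proof of the stability conjecture, not to Ma\~n\'e's characterization of AS diffeomorphisms in \cite{MANECASD}, which predates it.
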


At first we prove one simple lemma
\begin{lemmaa} \label{lemma:finb2infb}
If for a sequence $\seqz{w}{k}$ from $\Nc_\gamma$ there exists a constant $Q$ such that for any integer $N>0$ there exists a sequence $\{v^{N}_k\}_{k\in\Nints}$ of vectors from $\Rb^d$ satisfying equalities
\begin{equation}
v^{N}_{k+1} = A_k v^{N}_k + w_{k+1},\ k\in\Nint, \label{eq:lf2ib}
\end{equation}
and inequalities $\vmod{v^{N}_k}(|k|+1)^\gamma\leq Q,\ k\in\Nints$ then there exists a sequence  $\seqz{v}{k}$ such that it satisfy the same inequalities \eqref{eq:lf2ib} for every integer $k$ and $\normban{\seqz{v}{k}}_\gamma\leq Q.$
\end{lemmaa}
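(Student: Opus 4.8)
The plan is to run a standard diagonal (Cantor) extraction together with a passage to the limit in the recurrence. The finite-interval solutions $\{v^{N}_k\}$ all obey the uniform bound $\vmod{v^{N}_k}(|k|+1)^\gamma \le Q$, so for each fixed index $k$ the set $\setdef{v^{N}_k}{N\ge |k|}$ is a bounded subset of $\Rb^d$, hence precompact (here finite-dimensionality of $\Rb^d$ is what matters). The only genuine task is to produce a single sequence of parameters $N_j\to\infty$ along which $v^{N_j}_k$ converges for \emph{every} $k$ simultaneously.

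To do this, I would enumerate the integers as $0,1,-1,2,-2,\dots$ and extract nested subsequences: first choose a subsequence of $\{N\}$ along which $v^{N}_0$ converges (possible by boundedness and the Bolzano--Weierstrass theorem in $\Rb^d$), then pass to a further subsequence along which $v^{N}_1$ converges, then $v^{N}_{-1}$, and so on. Taking the diagonal subsequence $N_j$ yields, for each fixed $k$, a limit $v_k := \lim_{j\to\infty} v^{N_j}_k \in \Rb^d$; once $N_j \ge |k|$ the term $v^{N_j}_k$ is defined, so this limit makes sense for all $k\in\Zb$.

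It then remains to check that $\seqz{v}{k}$ has the two required properties. For the recurrence, fix $k$ and pass to the limit $j\to\infty$ in the identity $v^{N_j}_{k+1} = A_k v^{N_j}_k + w_{k+1}$, which holds for all $j$ large enough that $-N_j \le k \le N_j-1$; since $A_k$ is a continuous linear map and $w_{k+1}$ is fixed, the limit satisfies $v_{k+1}=A_k v_k + w_{k+1}$, that is, \eqref{eq:lf2ib} holds on all of $\Zb$. For the norm bound, fix $k$ and pass to the limit in $\vmod{v^{N_j}_k}(|k|+1)^\gamma\le Q$ to obtain $\vmod{v_k}(|k|+1)^\gamma\le Q$; taking the supremum over $k$ shows that $\seqz{v}{k}\in\Nc_\gamma(\Zb)$ with $\normban{\seqz{v}{k}}_\gamma\le Q$.

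The argument involves no substantial obstacle: the uniform bound supplies exactly the compactness needed for the diagonal extraction, and linearity (hence continuity) of each $A_k$ makes the passage to the limit in the recurrence immediate. The only point requiring slight care is the bookkeeping in the diagonalization, to guarantee that one subsequence serves all coordinates at once; everything else is a routine limit computation.
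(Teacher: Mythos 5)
Your proof is correct and follows exactly the route the paper takes: a diagonal extraction justified by the uniform bound $\vmod{v^N_k}(|k|+1)^\gamma\le Q$ and Bolzano--Weierstrass in $\Rb^d$, followed by passing to the limit in the recurrence and in the norm inequality. The paper's own proof is just a one-line statement of this same diagonal procedure, which you have merely written out in full detail.
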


\begin{proof}
To obtain a sequence needed we use a diagonal procedure (we take $v_k^{N}=0,\ k\notin \Nints$) and pass to a limit in inequalities \eqref{eq:lf2ib}. 
\hl{Th}e sequence we get as a result has all the necessary properties.
\end{proof}

\begin{statement}
Let $f$ be a diffeomorphism of a closed Riemannian manifold and let $\gamma > 0.$ If $f\in LTSLmSP(\gamma)$ then $f$  is structurally stable.
\end{statement}

\begin{proof}

Using Theorem \ref{thm:mainpliscor} we show that if we have Lipschitz two-sided limit shadowing property then the analytical transversality condition is satisfied at every point. After that we just apply the Mane theorem.

Fix a point $p\in M,$ denote $p_k=f^k(p)$ and define linear isomorphisms $A_k=Df(p_k)$ for $k\in\Zb.$ We denote a ball in $M$ with a radius $r$ and a center $x$ by $B(r,x)$ and a ball in $T_xM$ with radius $r$ and center $0$ by $B_T(r,x).$

The fact that the norms of all $A_k$ and $\inv{A_k}$ are bounded follows from the compactness of the manifold. We prove that under our assumptions property $B_{\gamma}(\Zb)$ is satisfied for the sequence of matrices $A_k.$ After that we will be able to use Theorem \ref{thm:mainpliscor}.

Let $\exp_x:T_xM\to M$ be a standard exponential mapping. There exists a $r>0$ such that for any point $x\in M$ the mapping $\exp_x$ is a diffeomorphism of a ball $B_T(r,x)$ onto its image and $\exp^{-1}_x$ is a diffeomorphism of a ball $B(r,x)$ onto its image. Moreover, we may assume that the smallness of $r$ allows us to write the following estimates for relations between distances in the manifold and in a tangent space:

\noindent if $v,w\in B_T(r,x)$ then
\begin{equation}
\label{neq:dist2mod}
\mbox{dist}(\exp_x(v),\exp_x(w))\leq 2|v-w|;
\end{equation}
if $y,z\in B(r,x)$ then
\begin{equation}
\label{neq:mod2dist}
|\exp^{-1}_x(y)-\exp^{-1}_x(z)|\leq 2\mbox{dist}(y,z).
\end{equation}

Consider mappings
$$
F_k=\emk\circ f\circ\ek: T_{p_k}M\to T_{p_{k+1}}M.
$$
From the well-known properties of an exponential mapping we deduce that $D\exp_x(0)=\mbox{Id};$ therefore
$$
DF_k(0)=Df(p_k).
$$
Since $M$ is compact for any $\eps>0,$ we can find a $\delta>0$ such that if $|v|\leq\delta,$ then for $g_k(v) = F_k(v)-A_kv$ the following inequality is satisfied
\begin{equation}
\label{neq:compactsmallness}
|g_k(v)|\leq\eps|v|.
\end{equation}

Let $L,d_0$ be the constants from the definition of $LTSLmSP(\gamma).$

We prove that for any sequence of vectors $\seqz{z}{k}\in\Nc_\gamma$ satisfying $\normban{\seqz{z}{k}}_\gamma < 1$ there exists a sequence $\seqz{v}{k}\in\Nc_\gamma$ that is a solution of equations
\begin{equation}
v_{k+1}=A_kv_k + z_{k+1},\quad k\in\Zb. \label{eq:sublinsmethdiffeq}
\end{equation}
After this if we use Theorem \ref{thm:mainpliscor} then we obtain that the analytical transversality condition is satisfied at the point $p.$

We show that the conditions of Lemma \ref{lemma:finb2infb} are satisfied.

We fix natural $N$ and define vectors $a_k:$
\begin{equation}
a_{-N}=0,\quad a_{k+1}=A_ka_k+z_{k+1},\quad k\in\Nint.
\end{equation}

\hl{It is obvious that norms of these vectors are bounded by a constant $C(N),$ that depends only on the norms of the matrices $A_k$ and the number $N.$ Now we fix a positive $d$ satisfying}
\begin{equation} \label{neq:dsmalltaylorwork}
    (8L + 4C(N))d \leq \delta.
\end{equation}

Now we assume that $d$ is small enough so that all the points of $M$ that appear belong to the corresponding balls $B(r,p_k)$ and all tangent vectors from $T_{p_k}M$ that appear belong to the corresponding balls $B_T(r,p_k).$

We define a sequence $\xi_k\in M$ in the following way: let
$\xi_k=\ek(da_k)$ for $|k|\leq N,$ $\xi_{N+k}=f^k(\xi_N)$ for $k>0$
and $\xi_{-N+k}(x)=f^k(\xi_{-N})$ for $k<0.$

We are going to write an estimate for $\dist(f(\xi_k),\xi_{k+1})$ for $ k \in \Nint.$
\hl{Denote} $R=\max_{x\in M} \normban{Df(x)}.$
\hl{Decrease $\eps$ from the inequality }   \eqref{neq:compactsmallness}   \hl{to make it satisfy}
\begin{equation} \label{neq:epsestimate}
    \eps < {(\vmod{k} + 1)^{-\gamma}}  \frac{1}{ (8L + 4C(N)) NR^{2N} }   ,\quad k\in\Nints.
\end{equation}
Decrease also $\delta$ so that inequality \eqref{neq:compactsmallness} still remain true.

Since
$$
\emk(f(\xi_k)) = F_k(da_k)=A_k (da_k) + g_k(da_k)
$$
and
$$
\emk(\xi_{k+1}) = da_{k+1}=d(A_ka_k + z_{k+1}),
$$
after use of estimates \eqref{neq:mod2dist}, \eqref{neq:compactsmallness} and \eqref{neq:epsestimate} we obtain
$$
\dist(f(\xi_k),\xi_{k+1})\leq 2|F_k(da_k)-da_{k+1}|=
$$
$$
= 2|g_k(da_k)-dz_{k+1}|\leq 4d(|k|+1)^{-\gamma}.
$$
For $k \notin \Nint$ the distance $\dist(f(\xi_k),\xi_{k+1})$ equals $0.$ Thus the sequence $\xi_k$ is a $\gamma$-decreasing $4d$-pseudotrajectory. Without loss of generality we can assume that $4d<d_0.$ Since $f\in LTSLmSP(\gamma),$ there exists a 
\hl{trajectory $y_k$ of $f$}
such that
$$
\dist(\xi_k,y_k)\leq 4Ld(|k|+1)^{-\gamma}.
$$
Let $t_k = \eki(y_k).$ Then
$$
t_{k+1} = F_k(t_k) = A_k t_k + g_k(t_k).
$$
%
\hl{Using inequality} \eqref{neq:dsmalltaylorwork} it is easy to see that
$$ \vmod{t_k}\leq 2\dist(y_k,p_k)\leq 2\dist(y_k,\xi_k) + 2\dist(\xi_k,p_k)\leq $$
$$ \leq 8Ld(|k|+1)^{-\gamma} + 4d|a_k| \leq \enbrace{8L + 4C(N)} d,\quad k\in\Nints. $$

 Denote
$$b_k = \Phi_{k,-N}t_{-N},\quad k\in\Nints,$$
$$c_k = t_k - b_k,\quad k\in\Nints.$$
\hl{Then, using inequality}  \eqref{neq:compactsmallness}, estimates of $\vmod{t_k}$ and inequality  \eqref{neq:epsestimate} \hl{we obtain the following:}
$$c_{-N} = 0,\quad c_{k+1} = A_k c_k + g_k(t_k),\quad k\in\Nint,$$
$$\vmod{c_k} = \vmod{ A_{k-1} \enbrace{ A_{k-2} t_{k-2} + g_{k-2}(t_{k-2}) } +g_{k-1}(t_{k-1}) - \Phi_{k,-N}t_{-N}} = $$
%
$$ = \left| \lb \Phi_{k,-N}t_{-N} + g_{k-1}(t_{k-1}) + A_{k-1} g_{k-2}(t_{k-2}) +\ldots + \right. \right. $$
$$ \left. \left. + \Phi_{k-1,-N} g_{-N} (t_{-N}) \rb - \Phi_{k,-N}t_{-N} \right| < N R^{2N} \eps (DL+4C(N)) d <  $$
$$ <d(|k|+1)^{-\gamma},\quad k\in\Nint. $$
Now we show that this is the sequence we have looked \hl{for:}
$$v_k = a_k - \frac{b_k}{d},\quad k\in\Nint,$$
$$v_k = 0,\quad k\notin\Nint.$$
The fact that this sequence is a solution of equations \eqref{eq:sublinsmethdiffeq} is obvious. To estimate its norm in the space $\Nc_\gamma$ by a number independent of $N$ we write \hl{the following:}
$$\vmod{a_k-\frac{b_k}{d}} = \frac{1}{d}\vmod{da_k - b_k} = \frac{1}{d}\vmod{ (\eki(\xi_k) - \eki(y_k) ) + c_k }\leq$$
$$\leq \frac{1}{d}\enbrace{8Ld(|k|+1)^{-\gamma} + d(|k|+1)^{-\gamma} } = (8L+1)(|k|+1)^{-\gamma}.$$

\end{proof}

\subsection{Structural stability implies Lipschitz two-sided limit shadowing property}

We use the method from \cite{PILSDS} to prove that structural stability implies Lipschitz two-sided limit shadowing property. Let $H_k,\ k\in \Zb$ be a sequence of subspaces of $\Rb^d.$ Consider a sequence of linear mappings $\mathcal{A}=\{ A_k:H_k \to H_{k+1} \}.$

\begin{deff} We say that a sequence $\mathcal{A}$ has property (C) with constants $N>1$ and $\lambda\in(0,1)$ if for any integer $k$ there exist projections $P_k, Q_k$ such that if $S_k=P_kH_k$ and $U_k=Q_kH_k$ then the following conditions are satisfied:
\begin{itemize}
\item $P_k + Q_k = Id,\quad \normop{P_k},\normop{Q_k} \leq N;$
\item $A_kS_k\subset S_{k+1}$ and $\normop{A_k\arrowvert_{S_k}}\leq \lambda;$
\item If $U_{k+1}\neq \{ 0 \}$ then there exists a linear mapping $B_k:U_{k+1}\to H_k$ such that
$$ B_kU_{k+1} \subset U_k,\ \normop{B_k}\leq \lambda,\ A_kB_k\arrowvert_{U_{k+1}} = I. $$
\end{itemize}
\end{deff}

The following theorem is a simple modification of Theorem 1.3.1 from \cite{PILSDS}:
\begin{theorem} \label{thm:like131}
Let $\gamma > 0$ and let $\mathcal{A}$ have property (C) with constants $N>1$ and $\lambda\in(0,1).$ Consider a sequence of mappings $f_k:H_k \to H_{k+1}$ of form $f_k(v) = A_k v + w_{k+1}(v).$ Suppose that there exist constants $\kappa,\Delta > 0$ such that the following inequalities are satisfied:
$$
\vmod{\omega_k(v)-\omega_k(v')}\leq \kappa \vmod{v-v'},\quad \vmod{v},\vmod{v'}\leq \Delta,\ k\in \Zb;
$$
$$ \kappa N_1 < 1, $$
with
$$ N_1 = N \frac{1+\lambda}{1-\lambda}. $$
Denote
$$ L = \frac{N_1}{1-\kappa N_1}. $$
Then if
$$ \normban{\{f_k(0)\}}_{\gamma} \leq d \leq d_0, \quad k\in \Zb, $$
with
$$d_0 = \frac{\Delta}{L},$$
then there exists a sequence $v_k\in H_k$ such that $f_k(v_k)=v_{k+1}$ and $\normban{\{v_k\}}_\gamma\leq Ld.$

\end{theorem}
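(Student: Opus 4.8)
The plan is to recast the problem of finding an orbit $\{v_k\}$ of the nonautonomous system $f_k$ as a fixed point of a contraction on a closed ball of $\Nc_\gamma(\Zb)$, following the classical bounded-norm argument (Theorem~1.3.1 of \cite{PILSDS}) but with the sup-norm replaced by the weighted norm $\normban{\cdot}_\gamma$. The first ingredient is a bounded linear solution operator $G$ for the inhomogeneous linear equation $v_{k+1}=A_kv_k+z_{k+1}$. Property (C) provides the splitting $H_k=S_k\oplus U_k$; on the stable part $A_k$ contracts with factor $\lambda$, and on the unstable part the maps $B_k$ are contracting right inverses with factor $\lambda$. Hence I would define $G(z)_k$ by propagating the stable component $P_jz_j$ forward (a sum over $j\le k$ built from the restrictions $A_\cdot\arrowvert_{S_\cdot}$) and the unstable component $Q_jz_j$ backward through the $B_k$ (a sum over $j>k$). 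Any fixed point of $\Psi(v)=G\lb\{w_k(v_{k-1})\}\rb$ is then exactly a solution of $v_{k+1}=A_kv_k+w_{k+1}(v_k)$.

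The heart of the argument, and the point where the weight $(\vmod{k}+1)^\gamma$ must be controlled, is the bound $\normop{G}\le N_1$ on $\Nc_\gamma$. Writing $\vmod{P_jz_j},\vmod{Q_jz_j}\le N\normban{z}_\gamma(\vmod{j}+1)^{-\gamma}$ and using $\normop{A_k\arrowvert_{S_k}}\le\lambda$, $\normop{B_k}\le\lambda$, the stable and unstable parts reduce to estimating
\[
(\vmod{k}+1)^\gamma\sum_{j\le k}\lambda^{k-j}(\vmod{j}+1)^{-\gamma}\quad\text{and}\quad(\vmod{k}+1)^\gamma\sum_{j> k}\lambda^{j-k}(\vmod{j}+1)^{-\gamma}.
\]
The key elementary inequality is $(\vmod{k}+1)^\gamma(\vmod{j}+1)^{-\gamma}\le(\vmod{k-j}+1)^\gamma$, which follows from the triangle inequality $\vmod{k}\le\vmod{j}+\vmod{k-j}$; it turns each weighted geometric sum into $\sum_{m\ge0}(m+1)^\gamma\lambda^m$, a convergent series. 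For $\gamma=0$ these sums collapse to $\tfrac1{1-\lambda}$ and $\tfrac{\lambda}{1-\lambda}$, giving $\normop{G}\le N\tfrac{1+\lambda}{1-\lambda}=N_1$ exactly as in the cited theorem.

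I expect the main obstacle to be precisely this weighted estimate. In contrast to the homogeneous sup-norm, the polynomial factors $(m+1)^\gamma$, together with the index-shift ratios $(\vmod{k}+1)^\gamma(\vmod{k\pm1}+1)^{-\gamma}\le 2^\gamma$ that appear when one compares the inhomogeneity $w_k(v_{k-1})$ at index $k$ with $v_{k-1}$ at index $k-1$, do not cancel for free; keeping the final constant equal to the clean value $N_1$ (rather than an inflated multiple) is the sole genuine difference from the classical proof and will require the careful bookkeeping above.

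Once $\normop{G}\le N_1$ is in hand, the rest is routine. On the closed ball of radius $Ld$ in $\Nc_\gamma$ the map $\Psi$ satisfies, using the Lipschitz bound $\vmod{w_k(v)-w_k(v')}\le\kappa\vmod{v-v'}$ (valid since $\vmod{v_k}\le\normban{v}_\gamma\le Ld\le Ld_0=\Delta$) together with $\normban{\{w_k(0)\}}_\gamma\le d$, the estimates
\[
\normban{\Psi(v)}_\gamma\le N_1\lb d+\kappa Ld\rb=Ld,\qquad\normban{\Psi(v)-\Psi(v')}_\gamma\le\kappa N_1\normban{v-v'}_\gamma,
\]
where the identity $N_1(1+\kappa L)=L$ follows from $L=N_1/(1-\kappa N_1)$. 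Since $\kappa N_1<1$, the map $\Psi$ is a contraction of a complete metric space, and the Banach fixed point theorem yields a sequence $\{v_k\}\in\Nc_\gamma$ with $f_k(v_k)=v_{k+1}$ and $\normban{\{v_k\}}_\gamma\le Ld$, as claimed.
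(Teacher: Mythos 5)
Your architecture is the same as the paper's: the Green-type operator $G$ that propagates the stable components $P_uz_u$ forward through $A_\cdot\arrowvert_{S_\cdot}$ and the unstable components $Q_uz_u$ backward through the $B_\cdot$, a boundedness estimate for $G$ on $\Nc_\gamma(\Zb)$, and then the contraction argument of Theorem~1.3.1 of Pilyugin's book. Your inequality $(\vmod{k}+1)^\gamma\le(\vmod{j}+1)^\gamma(\vmod{k-j}+1)^\gamma$ is correct, and the resulting proof that $G$ is bounded on $\Nc_\gamma$ is arguably cleaner and more self-contained than the paper's, which splits the sum at $u=0$ and at $u=k$ and appeals to an auxiliary series lemma for two of the three pieces.

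The gap is the constant, and you name it yourself without closing it. Your estimate gives $\normop{G}\le N\enbrace{2\sum_{m\ge0}(m+1)^\gamma\lambda^m-1}$, which for $\gamma>0$ is strictly larger than $N_1=N\frac{1+\lambda}{1-\lambda}$; you then write ``once $\normop{G}\le N_1$ is in hand, the rest is routine'' and run the fixed-point computation $N_1(d+\kappa Ld)=Ld$ with $N_1$. But the bound $\normop{G}\le N_1$ is not just unproved, it is false for large $\gamma$: already in the scalar all-stable case $A_k\equiv\lambda$, $z_u=(\vmod{u}+1)^{-\gamma}$, one computes $(\vmod{k}+1)^\gamma(G(z))_k\ge 1+2^\gamma\lambda$ at $k=1$, which exceeds any fixed multiple of $\frac{1+\lambda}{1-\lambda}$ once $\gamma$ is large enough. (The same applies to the extra factor $\enbrace{\frac{\vmod{k}+1}{\vmod{k-1}+1}}^\gamma\le2^\gamma$ coming from the index shift in $w_k(v_{k-1})$, which you also flag but do not absorb.) Consequently the hypothesis $\kappa N_1<1$ does not make $\Psi$ a contraction, and the ball of radius $Ld$ with $L=N_1/(1-\kappa N_1)$ need not be $\Psi$-invariant. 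The statement is only provable with $N_1$ replaced by a larger, $\gamma$-dependent constant, and $L$, $d_0$ adjusted accordingly; this is in effect what the paper does, since its proof likewise only bounds the relevant sums by unspecified constants before declaring the rest ``analogous to Theorem~1.3.1,'' and nothing downstream uses the precise value of $L$. So your route is the paper's route and your estimates are sound, but the final paragraph must be rerun with the inflated constant — as written, it rests on an operator-norm bound that does not hold.
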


\begin{proof}
Consider an operator $G:\Nc_{\gamma}(\Zb) \to (\Rb^d)^\Zb$ of the form
$$G(z) = \hat{g}_1(z) +  \hat{g}_2(z),$$
\hl{with}
$$ (\hat{g}_1(z))_k = \sum_{u=-\infty}^{k} A_{k-1}\dots A_{u}P_u z_u,$$
$$ (\hat{g}_2(z))_k = -\sum_{u=k+1}^{\infty} B_{k}\dots B_{u-1}Q_u z_u.$$
We prove that the operator $G$ maps $\Nc_{\gamma}(\Zb)$ to $\Nc_{\gamma}(\Zb)$ and is bounded. 
We prove \hl{that} 
$(|k|+1)^{\gamma} \vmod{(G(x))_k} \leq C$
for $k \geq 0$ (for $k\leq 0$ the proof is analogous). We represent $G(z)$ in the following form
$$G(z) = g_1(z) +  g_2(z) + g_3(z)$$
with
$$ (g_1(z))_k = \sum_{u=-\infty}^{0} A_{k-1}\dots A_{u}P_u z_u,$$
$$ (g_2(z))_k = \sum_{u=0}^{k} A_{k-1}\dots A_{u}P_u z_u,$$
$$ (g_3(z))_k = -\sum_{u=k+1}^{\infty} B_{k}\dots B_{u-1}Q_u z_u.$$
Lemma \ref{lemma:seriesbound} allows us to estimate $(k+1)^{\gamma}\vmod{ (g_2(z))_k + (g_3(z))_k }.$ It remains to estimate only $(k+1)^{\gamma} \vmod{(g_1(z))_k}$\hl{:}
$$
(k+1)^{\gamma} \vmod{ (g_1(z))_k } \leq (k+1)^{\gamma} \sum_{u=-\infty}^{0} \lambda^{k-u} (\vmod{u}+1)^{-\gamma} \leq
$$
%
$$
\leq (k+1)^{\gamma} \lambda^k \sum_{u=0}^{\infty} \lambda^{u} \leq C_1.
$$
The rest of the proof is fully analogous to the proof of Theorem 1.3.1 from \cite{PILSDS}.
\end{proof}

Now one can repeat the proof of Theorem 2.2.7 from \cite{PILSDS}, using previous theorem instead of Theorem 1.3.1 from \cite{PILSDS}, where it is necessary. In view of this we have just proved the second implication (the ``only if'' part) of Theorem \ref{thm:shadeqss}.

\begin{theorem}
Let $f$ be a structurally stable diffeomorphism of the closed Riemannian manifold $M.$ Then $f\in LTSLmSP(\gamma)$ for any $\gamma \geq 0.$
\end{theorem}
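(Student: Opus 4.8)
The plan is to follow the scheme of the proof of Theorem 2.2.7 in \cite{PILSDS}, replacing the linear shadowing tool used there (Theorem 1.3.1) by our weighted version, Theorem \ref{thm:like131}, so that the polynomial decay rate $(|k|+1)^{-\gamma}$ is carried through the construction. For $\gamma = 0$ the assertion is exactly the classical statement that structural stability implies Lipschitz shadowing, covered directly by Theorem 2.2.7 of \cite{PILSDS}, so throughout I would assume $\gamma > 0$, which is the range for which Theorem \ref{thm:like131} applies.

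First I would reduce the shadowing problem to solving a nonlinear difference equation in exponential coordinates. Given a $\gamma$-decreasing $d$-pseudotrajectory $\{x_k\}$, set $H_k = T_{x_k}M$ and, for $d$ small enough that all the exponential charts are defined, put $F_k = \exp_{x_{k+1}}^{-1}\circ f \circ \exp_{x_k}\colon H_k \to H_{k+1}$, as in the proof of the previous Statement. Writing $F_k(v) = A_k v + w_{k+1}(v)$ with $A_k = DF_k(0)$, a point $p$ whose orbit shadows $\{x_k\}$ corresponds to a bounded solution of $v_{k+1} = F_k(v_k)$ through $f^k(p) = \exp_{x_k}(v_k)$. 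The free term is $F_k(0) = \exp_{x_{k+1}}^{-1}(f(x_k))$, and by \eqref{neq:mod2dist} together with the definition of a $\gamma$-decreasing pseudotrajectory one gets $\vmod{F_k(0)} \leq 2 d (|k|+1)^{-\gamma}$, hence $\normban{\{F_k(0)\}}_\gamma \leq 2d$.

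The main step, and the principal obstacle, is to produce property (C) with constants $N>1$ and $\lambda \in (0,1)$ that are uniform over all such pseudotrajectories. This is exactly where structural stability enters: as in \cite{PILSDS}, structural stability of $f$ (equivalently, by the Ma\~n\'e theorem, the analytical transversality condition at every point together with the resulting robustness of the hyperbolic splitting) yields that the sequence of differentials $\{A_k\}$ along any orbit --- and, by a continuity argument, along any pseudotrajectory with sufficiently small jumps, since $A_k$ is $C^0$-close to $Df(x_k)$ --- admits projections $P_k,Q_k$ onto stable and unstable directions satisfying the three clauses of property (C) with $N,\lambda$ independent of the pseudotrajectory. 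Establishing this uniform splitting, rather than merely a pointwise dichotomy, is the technical heart; once it is in hand the remaining smallness conditions are routine, for by $C^1$-continuity of $f$ and compactness of $M$ the nonlinear remainders $w_{k+1}$ have Lipschitz constant $\kappa$ on the ball $B_T(\Delta, x_k)$ that tends to $0$ as $\Delta\to 0$, so one fixes $\Delta$ small enough that $\kappa N_1 < 1$ with $N_1 = N(1+\lambda)/(1-\lambda)$.

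Finally I would apply Theorem \ref{thm:like131} to the maps $f_k = F_k$. Taking $d$ small enough that $2d \leq \Delta/L$ with $L = N_1/(1-\kappa N_1)$, the theorem produces a sequence $\{v_k\}$ with $F_k(v_k) = v_{k+1}$ and $\normban{\{v_k\}}_\gamma \leq 2Ld$. Setting $p = \exp_{x_0}(v_0)$, the orbit $f^k(p) = \exp_{x_k}(v_k)$ then satisfies, by \eqref{neq:dist2mod}, the estimate $\dist(x_k, f^k(p)) \leq 2\vmod{v_k} \leq 4Ld (|k|+1)^{-\gamma}$ for every $k \in \Zb$. Renaming the constant gives $f \in LTSLmSP(\gamma)$, which is the remaining ``if'' implication of Theorem \ref{thm:shadeqss}.
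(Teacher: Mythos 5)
Your proposal follows exactly the paper's route: the paper proves the weighted version of Theorem 1.3.1 of \cite{PILSDS} (Theorem \ref{thm:like131}) and then simply states that one repeats the proof of Theorem 2.2.7 of \cite{PILSDS} with that substitution, which is precisely the scheme you carry out (and your explicit treatment of the $\gamma=0$ case and of the reduction to exponential coordinates only fills in details the paper leaves to the reference). The approaches are essentially the same.
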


\section{Acknowledgements}

I am grateful to S. Yu. Pilyugin and S. Tikhomirov for their helpful comments and advices.


\printbibliography

\textit{E-mail address:} todorovdi@gmail.com


\end{document}